\documentclass[12pt]{article}
\usepackage[utf8]{inputenc}     
\usepackage[english]{babel}     
\usepackage[a4paper, left=1.5in, right=1.5in, top=1.5in, bottom=1.5in]{geometry}
\usepackage{graphicx}           
\usepackage{amsmath,amssymb}    
\usepackage{amsthm}             
\usepackage{mathtools}          
\usepackage{enumitem}           
\usepackage{listings}           
\usepackage{todonotes}          
\usepackage{hyperref}           
\usepackage{indentfirst}
\usepackage{amsthm}
\usepackage{pifont}
\usepackage{mathrsfs}

\def\thesistitle{Estimate of Periodic Orbits of Degenerate Hamiltonians}
\def\thesisauthorfirst{Jiao Hao}
\def\thesisauthorsecond{}
\def\thesisdate{December 2026}

\title{\thesistitle}
\author{\thesisauthorfirst\space\thesisauthorsecond}
\date{\thesisdate}

\newtheorem{theorem}{Theorem}[section]

\newtheorem{lemma}[theorem]{Lemma}
\newtheorem{proposition}[theorem]{Proposition}

\theoremstyle{definition}
\newtheorem{definition}[theorem]{Definition}
\theoremstyle{claim}
\newtheorem{claim}[theorem]{Claim}

\theoremstyle{remark}

\begin{document}
\setlength{\parindent}{2em}

\newpage
\maketitle

At first, I want to mention that this is my graduation thesis as an undergraduate student. In this paper, I provide a survey of some results of degenerate Arnold conjecture and write down some of them in the language of Floer theory explicitly, so there are no new results in this paper.

\section{Introduction}

Morse theory is a useful tool for studying critical points of functions. Using gradient flow, it cleverly combines the number of critical points with the homology groups. So, people want to use the same strategy to study the property of time-dependent things.

That is, for any $H:S^1\times M\rightarrow\mathbb{R}$, we study the gradient flow of actional function $\mathcal{A}_H$ defined on $\Omega_0(M)$, the connected component contains constant loop in loop sapce of M. $\mathcal{A}_H$ is defined by $\mathcal{A}_H(\gamma):=\int_{S^1}H_t(\gamma(t))dt+\int_{\mathbb{D}^2}u^*\omega$ for some $u:\mathbb{D}^2\rightarrow M$ with $u|_{\partial \mathbb{D}^2} = \gamma$. Note that this definition is depend on the choice of u, so $\mathcal{A}_H$ can only be a $S^1$ valued function or well defined on the universal covering of $\Omega_0(M)$.

A direct observation is that $\gamma$ is a critical point of $\mathcal{A}_H$ if and only if it satisfies the equation

\begin{center}
    $\frac{d\gamma}{dt} = X_{H_t}(\gamma(t))$.
\end{center}
$X_{H}$ is defined by $dH = \omega(\cdot,X_H)$. And this is just the formula of Hamiltonian flow with Hamiltonian H. So, critical points of $\mathcal{A}_H$ are correspondence to fixed point of time-1 Hamiltonian transformation $\psi^1$, $\dot{\psi^t} = X_{H_t}(\psi^t)$ $\psi:M\rightarrow M$.

\begin{center}
    \{ critical points of $\mathcal{A}_H$ \}$\longleftrightarrow$\{ fixed points of $\psi^1$ \}\\
    $\ \ \ \ \ \ \ \ \ \gamma \ \ \ \ \  \longleftrightarrow\ \ \ \ \ \gamma(0)$
\end{center}

Then about the number of these fixed points, Vladimir Arnold proposed Arnold conjecture in the 1960s\cite{Arnold86}. The conjecture is expressed as follows\cite{LiuTian}.

\begin{theorem}(Arnold conjecture)

    Let $(M,\omega)$ be a closed symplectic manifold. A Hamiltonian diffeomorphism $\varphi :M\rightarrow M$ is called nondegenerate if its graph intersects the diagonal of $M\times M$ transversely. For nondegenerate Hamiltonian diffeomorphisms, the number of fixed points of $\varphi$ is greater than or equal to the sum of Betti numbers over any filed $\mathbb{F}$, namely
    \begin{center}
        $\sum_{i=0}^{2n}dim H_i(M;\mathbb{F})\leq$ \#\{fixed points of $\varphi$\}
   \end{center}
\end{theorem}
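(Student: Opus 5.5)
The plan is to prove the statement by constructing Hamiltonian Floer homology $HF_*(M,\omega;H)$ for a nondegenerate Hamiltonian $H$ generating $\varphi$, and comparing it with the singular homology of $M$. By the correspondence above, fixed points of $\varphi$ are the critical points of $\mathcal{A}_H$ on the covering of $\Omega_0(M)$. Nondegeneracy of $\varphi$ means the linearized return map at each fixed point has no eigenvalue $1$. Together with a capping, this gives each critical point a Conley--Zehnder index, which serves as the Morse index. The Floer chain complex $CF_*(H)$ is then generated over the Novikov ring $\Lambda_{\mathbb{F}}$ of $\omega$ by the capped orbits. Its rank over $\Lambda_{\mathbb{F}}$ equals \#\{fixed points of $\varphi$\}, since cappings differing by a sphere are identified through the Novikov variable.

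The steps are as follows.

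\textbf{Differential.} Choose an $\omega$-compatible $S^1$-family of almost complex structures $J_t$. Define the differential by counting index-one solutions $u:\mathbb{R}\times S^1\to M$ of the Floer equation
\[
\partial_s u + J_t(u)\bigl(\partial_t u - X_{H_t}(u)\bigr)=0,
\]
taken modulo the $\mathbb{R}$-translation and asymptotic to pairs of orbits. This needs three ingredients. First, Fredholm theory for the linearized operator, which uses nondegeneracy; the index is the difference of Conley--Zehnder indices. Second, transversality for generic $(H,J)$ via Sard--Smale. Third, Gromov compactness, using the energy identity $E(u)=\mathcal{A}_H(x_-)-\mathcal{A}_H(x_+)$. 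Gluing then gives $\partial^2=0$.

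\textbf{Invariance.} Use continuation maps for homotopies $(H^s,J^s)$ to show that $HF_*(H)$ is independent of $(H,J)$.

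\textbf{Identification with $H_*(M)$.} Take $H=\epsilon f$ with $f$ a $C^2$-small Morse function and $J$ time-independent. Then all $1$-periodic orbits are the critical points of $f$. For $\epsilon$ small, every index-one Floer trajectory is $t$-independent and so is a gradient line of $f$. This gives
\[
HF_*(H)\cong HM_*(f)\otimes\Lambda_{\mathbb{F}}\cong H_*(M;\mathbb{F})\otimes\Lambda_{\mathbb{F}}.
\]
Since the rank of the homology of a free complex is at most the rank of the complex, the fixed-point count is bounded below by $\sum_i \dim H_i(M;\mathbb{F})$.

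I expect the main obstacle to be compactness and transversality in general $(M,\omega)$. Sequences of Floer cylinders can bubble off $J$-holomorphic spheres, and these cannot be excluded when $c_1$ and $\omega$ interact badly on $\pi_2(M)$. In the semipositive case I would first use a dimension count for generic $J$: simple spheres appear in codimension at least two, so they do not affect the $0$- and $1$-dimensional moduli spaces. For general closed $M$ the same argument fails, because multiply covered spheres obstruct transversality. There I would invoke the virtual fundamental cycle or Kuranishi structure technique of Liu--Tian and Fukaya--Ono to define the counts. Over a general field $\mathbb{F}$ this may require working with rational coefficients or a more careful treatment of the isotropy.

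A second delicate point is $t$-independence in the comparison with Morse homology. I would handle it by an $S^1$-action argument: a nonconstant index-one trajectory would come in a free $S^1$-family and so could not be isolated. This also requires transversality for the $t$-independent $J$, which in the general case again goes through the virtual techniques.
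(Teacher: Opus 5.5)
Your outline is the standard Floer-theoretic route, and it is sound in two cases. For semipositive $(M,\omega)$ the relevant moduli spaces are honestly transverse and coherently oriented, so it works over $\mathbb{Z}$ and hence over any field. For arbitrary closed $(M,\omega)$ the Fukaya--Ono/Liu--Tian virtual techniques make it work over $\mathbb{Q}$.

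The gap is the claim for an \emph{arbitrary} field $\mathbb{F}$ in the general case. The virtual constructions you invoke perturb by multisections. Near multiply covered spheres, and in your comparison step near Floer cylinders with finite stabilizer under the $S^1$-rotation, the zero sets are counted with rational weights coming from isotropy groups and from branches of the multisection. So the differential and continuation maps are defined only over $\mathbb{Q}$. What your argument yields is $HF_*(H;\Lambda_{\mathbb{Q}})\cong H_*(M;\mathbb{Q})\otimes\Lambda_{\mathbb{Q}}$, i.e.\ the bound $\sum_i\dim H_i(M;\mathbb{Q})$. When $\mathrm{char}\,\mathbb{F}=p$ and $H_*(M;\mathbb{Z})$ has $p$-torsion, this is strictly smaller than $\sum_i\dim H_i(M;\mathbb{F})$. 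The rational counts also cannot in general be reduced mod $p$, since the denominators may be divisible by $p$. So ``working with rational coefficients'' does not prove the statement, and ``a more careful treatment of the isotropy'' is not a technicality but precisely the missing idea.

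What is needed is a construction of the Floer complex over $\mathbb{Z}$ or $\mathbb{F}_p$ for general $M$. You also need a comparison with Morse homology that does not rely on $S^1$-equivariant multisections, which are again inherently rational. This is the content of the following works.
\begin{itemize}
\item Abouzaid--Blumberg build a Floer homotopy type with Morava $K$-theory coefficients. For large height $n$, the rank of $K(n)_*(M)$ equals $\dim H_*(M;\mathbb{F}_p)$, which gives the $\mathbb{F}_p$ bound.
\item Bai--Xu and Rezchikov construct integral virtual counts. Bai--Xu do this roughly via perturbations that are normally complex along the strata with nontrivial isotropy, so these strata have codimension at least two and miss the $0$- and $1$-dimensional moduli spaces.
\end{itemize}
The paper itself gives no proof of this theorem. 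It quotes exactly this chain: Fukaya--Ono for $\mathbb{Q}$, Abouzaid--Blumberg for $\mathbb{Z}/p\mathbb{Z}$, and Bai--Xu and Rezchikov for $\mathbb{Z}$. Your sketch covers only the characteristic-zero part of what it cites.

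A minor point: only contractible orbits generate $CF_*(H)$, so its rank is at most, not equal to, the number of fixed points. That is all the inequality needs.
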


The nondegenerate case can also be expressed for any critical point $\gamma$ of $\mathcal{A}_H$, $det(Id-T_{\gamma(0)}\psi^1)\neq0$. But this case is not always satisfied, so there is also a weak version of Arnold conjecture, in which the lower bound is the number of critical points of any function. Here, we mainly consider a degenerate version, namely

\begin{theorem}(degenerate Arnold conjecture)

    Let $(M,\omega)$ be a closed symplectic manifold. For all Hamiltonian diffeomorphisms, the number of fixed points of $\varphi$ is greater than or equal to the cup length of cohomology ring of M, namely
    \begin{center}
        $cuplength\ of\ H^*(M)\leq$ \#\{fixed points of $\varphi$\}
   \end{center}    
\end{theorem}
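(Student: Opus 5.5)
The plan is to adapt the Lusternik--Schnirelmann argument to Floer theory. Suppose $\varphi=\psi^1_H$ has only finitely many fixed points, since otherwise there is nothing to prove. Let $k$ be the cup length, so there are classes $a_1,\dots,a_{k-1}\in H^{>0}(M)$ with $a_1\cup\cdots\cup a_{k-1}\neq 0$. I will produce $k$ pairwise distinct fixed points.

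The first step sets up filtered Hamiltonian Floer homology $HF(H)$ on the Novikov covering $\widetilde{\Omega}_0(M)$ with Novikov field coefficients $\Lambda$. For a general closed $(M,\omega)$ this needs virtual techniques (Fukaya--Ono, Liu--Tian, Pardon), which I take as the foundational input. The PSS isomorphism $QH^*(M;\Lambda)\cong HF(H)$ then defines spectral invariants $c(\alpha,H)$ for nonzero $\alpha\in H_*(M;\Lambda)$. I also need the quantum cap action of $H^*(M)$ on $HF(H)$, realised by counting Floer cylinders through a cycle Poincaré dual to $a$. Using a degenerate $H$ requires a $C^2$-small perturbation. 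Continuity of spectral invariants then shows that $c(\alpha,H)$ is a critical value of $\mathcal{A}_H$, attained at a lift of an actual fixed point.

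Set $\alpha_j = a_j\cap\cdots\cap a_{k-1}\cap[M]$ for $j=1,\dots,k$, with $\alpha_k=[M]$, and use the quantum cap product. Its lowest-order (classical) term is the ordinary cap product, so every $\alpha_j\neq 0$. A cap action argument in which the cycle representing $a$ is pushed close to the flow gives
\[
c(\alpha_1,H)\le c(\alpha_2,H)\le\cdots\le c(\alpha_k,H).
\]
Each $c(\alpha_j,H)$ is carried by a fixed point $x_j$ together with a capping. The real content is to prove that $x_1,\dots,x_k$ are pairwise distinct. In the aspherical case strict inequalities would suffice, but here actions are defined only modulo the period group $\Gamma$. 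A strict inequality therefore does not by itself separate the fixed points, because the same $x$ with a different capping can realise several of the values.

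The key step, which I expect to be the main obstacle, is a localization statement. Choose small disjoint Darboux balls $B_x$ around the finitely many fixed points, and choose representatives of each $a_j$ supported away from $\bigcup_x B_x$; this is possible because a positive-degree class restricts to zero on a ball. Filter the Floer complex by action windows and pass to local Floer homology near each orbit. I would then show that if $\alpha_{j+1}$ and $\alpha_j$ have spectral values carried by the same fixed point, with possibly different cappings, then capping by $a_j$ acts as zero on the relevant local Floer homology $HF^{\mathrm{loc}}(H,x)\otimes\Lambda$. It would follow that $c(\alpha_j)$ cannot be represented at that orbit, a contradiction. I expect this to need the energy--capacity estimate that cylinders of small energy stay inside $B_x$, applied uniformly over all $\Gamma$-shifts of cappings, since the local complex is $\Gamma$-equivariant. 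Controlling the quantum correction terms in the cap product within this filtration is where I expect the technical difficulty to concentrate.

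Granting this step, the fixed points $x_1,\dots,x_k$ are distinct, so $\#\mathrm{Fix}(\varphi)\ge k$, the cup length of $H^*(M)$, which is the claimed bound.
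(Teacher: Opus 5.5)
There is a genuine gap, and it is in the step you flagged. The paper does not prove this statement in this generality. It calls the degenerate conjecture only partially solved and proves special cases ($\mathbb{CP}^n$, monotone one-point blow-ups, products of projective spaces with equal areas); the other results it surveys need extra hypotheses (Givental's weaker toric bound, Schwarz's $d(\psi,\mathrm{id})\le p$). Your outline up to $c(\alpha_1,H)\le\cdots\le c(\alpha_k,H)$ is standard. The localization step you grant, however, is exactly the open content, and the mechanism you propose for it cannot work.

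$HF^{\mathrm{loc}}(H,x)$ only sees cylinders of energy below the action gap, which stay in $B_x$. It is true that $|a_j|$, supported outside $\bigcup_x B_x$, acts by zero on it. But all this gives is the strict inequality $c(\alpha_j,H)<c(\alpha_{j+1},H)$, which is what the paper's Lemma~\ref{lemma} gives: a Floer cylinder through a cycle avoiding the orbits has energy at least $\delta$.

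The case you must exclude is $c(\alpha_{j+1},H)=\mathcal{A}_H(x,\bar x)$ and $c(\alpha_j,H)=\mathcal{A}_H(x,\bar x\#A)$ with $A\neq 0$. The part of the cap action joining these generators counts cylinders from $x$ back to $x$ in relative class $A$ that pass through $|a_j|$. These have energy of order $|\omega(A)|$, which is not small, and they must leave $B_x$ to meet $|a_j|$, so no energy--capacity estimate applies. $\Gamma$-equivariance does not rescue this. It identifies the local complexes of different cappings up to a shift, but the component of the global cap action between different $\Gamma$-translates is not determined by data near $x$.

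Even granted, your step as formulated only separates consecutive $x_j,x_{j+1}$. Pairwise distinctness needs it for the products $a_i\cup\cdots\cup a_{j-1}$ as well. Without the step you get $k$ distinct critical values of $\mathcal{A}_H$, which, as you note, may all come from one fixed point.

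Excluding returns to an orbit with shifted capping needs global input, which is why the paper restricts to special manifolds. In the $\mathbb{CP}^n$ proof this input comes from three facts used together with Lemma~\ref{lemma}:
\begin{enumerate}
\item Rationality $\omega(H_2)=p\mathbb{Z}$: two orbits of one group inside a block of $n+1$ caps force that block to carry energy $>p$.
\item The relation $a^{n+1}=e^A$ in $HQ^*(\mathbb{CP}^n)$ for the minimal class $A$: the $N$-fold iterate is nonzero, so pigeonhole gives a repeated orbit $l_i=l_j$.
\item Monotonicity: the index count then pins the energy between $l_i$ and $l_j$ to exactly $p(j-i)$, contradicting $>p(j-i)$.
\end{enumerate}
In Schwarz's theorem, $d(\psi,\mathrm{id})\le p$ confines the whole chain to an action window of length at most $p$. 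Two cappings of one orbit would have actions differing by a nonzero element of $p\mathbb{Z}$, so they cannot both occur. For general $(M,\omega)$ no such bookkeeping is available, so your proposal is a reduction to an unproved localization claim, not a proof.

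A secondary issue concerns spectrality for degenerate $H$. Obtaining it by continuity works when the action spectrum is closed, for example when $\omega$ is rational. When $\omega(\pi_2(M))$ is dense, limits of actions of perturbed orbits with varying cappings need not be critical values of $\mathcal{A}_H$, so that step also needs more than continuity.
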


To solve this problem, Conley-Zehnder first gave a proof for torus in 1983\cite{ConleyZehnder1983}. Floer established Floer homology and successfully solved Arnold conjecture when the symplectic manifold is monotone\cite{Floer1989}. This method is extended by Le an Ono to the negatively monotone case\cite{LeOno}. Ono did some results by himself in semipositive case\cite{Ono1995}, Hofer and Salamon also did some works in semipositive case\cite{HoferSalamon1995}. In 1999, Fukaya and Ono solved the problem for all symplectic in $\mathbb{Q}$ coefficient\cite{FukayaOno}. Then Mohammed Abouzaid and Andrew J. Blumberg proved it in $\mathbb{Z}/p\mathbb{Z}$ coefficient in 2021\cite{AbouBlum21}. And $\mathbb{Z}$ coefficient case was finally solved by Bai-Xu. So far, the nondegenerate Arnold conjecture is totally solved\cite{Xu-Bai}\cite{Rezchikov}. 

But the degenerate version is just partially solved. Floer first gave the proof of a special case, when the manifold is $\mathbb{CP}^n$, in his early paper\cite{Floer1989}. In this paper, he used the action of quantum cohomology on the Floer chain complex, which he called the “cap action”. And he proved in general case when $\omega(H_2(M)) = 0$ in \cite{FloerLag}. But quantum cohomology is not so well known at that time, so there are something not very clear in his paper. Following his idea, this paper will give an explicit proof of his result and consider the result in some other special manifolds. And we will introduce some other results at first.

\section{Some Development of Degenerate
Arnold Conjecture.}

\subsection{Result of degenerate version on toric manifolds}
In 1995, Givental published a paper in $The\ Floer\ Memorial\ Volume$ and gave the result\cite{Givental}. We mainly introduce his work in this section.

\begin{theorem}[](Givental)

    Let M be a compact toric maniflod with an integer class p(M) of the symplectic form. Then for any hamiltoonian diffeomorphism $h:M\rightarrow M$ 

    (i)the number of its fixed points is not less than 
\begin{center}
    $\max\limits_{\mathcal{E}} {(c(M),\gamma)/(p(M),\gamma)}$
\end{center}

    (ii)the total multiplicity of its fixed points is not less than dim $H^*(M,\mathbb{C})$
\end{theorem}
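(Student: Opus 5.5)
We follow Givental's strategy, recast in Floer-theoretic language: Floer homology for toric $M$ is a module over the quantum cohomology ring $QH^*(M)$, and for a toric manifold this ring is presented by the Batyrev--Givental relations. Both parts of the statement come from comparing the structure of this module with local data at the fixed points of $h$. We assume $h$ is generated by $H:S^1\times M\to\mathbb{R}$, so fixed points correspond to critical points of $\mathcal{A}_H$ on the covering $\widetilde{\Omega}_0(M)$. We also assume $[\omega]=p(M)$ is integral, so $\mathcal{A}_H$ is well defined modulo $\mathbb{Z}$.

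\textbf{Step 1 (module structure and the Novikov action).} Set up the Floer complex $CF_*(H)$ over the Novikov ring $\Lambda$. The deck group of $\widetilde{\Omega}_0(M)\to\Omega_0(M)$ is generated by classes $\gamma\in\pi_2(M)$. A deck transformation by $\gamma$ shifts the action by $(p(M),\gamma)$ and the Conley--Zehnder degree by $2(c(M),\gamma)$. Floer's cap action makes $HF_*(H)$ a module over $QH^*(M)$, and the PSS/continuation isomorphism gives $HF_*(H)\cong QH^*(M)$ as modules.

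\textbf{Step 2, part (ii).} Perturb $H$ slightly to $H'$. Each degenerate fixed point $x$ splits into nondegenerate orbits whose number, counted with algebraic multiplicity, equals its local multiplicity. This is the local Floer homology $HF^{loc}(x)$, of rank $\mu(x)$. A spectral sequence from the action filtration, using small action windows around each critical value, converges to $HF_*(H)$ with $E_1$ page $\bigoplus_x HF^{loc}(x)\otimes\Lambda$. Taking ranks over the Novikov field gives $\sum_x \mu(x)\ge \operatorname{rank} QH^*(M)=\dim H^*(M;\mathbb{C})$.

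\textbf{Step 3, part (i).} We run an action--degree argument in the spirit of Floer's proof for $\mathbb{CP}^n$.
\begin{itemize}
\item Take the set $\mathcal{E}$ of classes $\gamma$, represented for instance by toric divisor spheres, for which quantum multiplication by the relevant generators produces the Novikov variable $q^\gamma$. In the Batyrev ring, multiplying the fundamental class by suitable divisor classes returns $q^\gamma\cdot 1$. Such products drop the action by exactly $(p(M),\gamma)$ and shift degree by $2(c(M),\gamma)$.
\item Define spectral invariants $c(a,H)$ for $a\in QH^*(M)$. Two properties are needed: $c(a*b,H)\le c(a,H)+\mathrm{val}(b)$, and strict inequality unless the relevant action value is infinitely degenerate.
\item This yields a chain $1, b_1, b_1b_2,\dots$ of length $N=(c(M),\gamma)/(p(M),\gamma)$, interpolating to $q^\gamma$.
\item Its spectral values lie in an interval of length $(p(M),\gamma)$ and form a monotone sequence of $N$ steps. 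Modulo the period $(p(M),\gamma)$, these must be $N$ distinct critical values of $\mathcal{A}_H$, unless some equality occurs.
\item An equality forces infinitely many fixed points, by Floer/Lusternik--Schnirelmann reasoning: if $c(a*u,H)=c(a,H)$ with $u$ non-invertible, the critical set at that level is homologically nontrivial.
\end{itemize}
Maximizing over $\gamma\in\mathcal{E}$ gives the bound in (i).

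\textbf{Main obstacle.} The hardest step is Step 3. Two things must be checked there. First, the Batyrev relations must give enough nilpotent divisor products to produce a chain of the claimed length, which depends on the choice of $\mathcal{E}$. Second, the strict-inequality (Lusternik--Schnirelmann) property of spectral invariants must be proved in the degenerate setting, via a limiting argument over nondegenerate perturbations. For a general toric $M$, one also needs virtual techniques or semipositivity to define $QH^*$ and the cap action. We would first treat Fano toric manifolds, where these analytic issues are standard.
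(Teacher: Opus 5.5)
Your Floer-theoretic route differs from the paper's. As written, though, Step 3 does not prove (i), for two concrete reasons.

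\textbf{The count in Step 3 is wrong.} The paper derives (i) from its lattice version, that is, from classes $\gamma$ with $m_i=(D_i,\gamma)\ge 0$ for every toric divisor $D_i$ ($m\in\mathbb{Z}^n_+\cap\mathbb{Z}^k$). For these, $(c(M),\gamma)=\sum_i m_i$ and $(p(M),\gamma)=\langle p,m\rangle$. You must prove your relation for exactly these classes, rather than defining $\mathcal{E}$ as whatever makes the argument run. Torus-invariant spheres need not qualify: the exceptional curve of a blow-up has some $m_i<0$.

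For such $\gamma$, the Batyrev relation is $x_1^{m_1}*\cdots*x_n^{m_n}=q^\gamma$. This is a chain of $(c(M),\gamma)$ multiplications by degree-$2$ classes, as your own degree count $2(c(M),\gamma)$ says. A chain of length $(c(M),\gamma)/(p(M),\gamma)$ ending at $q^\gamma$ cannot exist once $(p(M),\gamma)>1$, for degree reasons.

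Moreover, the period of the action spectrum is not $(p(M),\gamma)$. It is the positive generator $d$ of $\omega(\pi_2(M))\subset\mathbb{Z}$. So values that are distinct modulo $(p(M),\gamma)$ need not come from distinct fixed points.

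The two mistakes cancel numerically, but the valid argument is the pigeonhole behind the paper's lattice version. You get $(c(M),\gamma)$ strictly decreasing spectral values in a half-open window of length $(p(M),\gamma)$. Each fixed point contributes exactly $(p(M),\gamma)/d\le(p(M),\gamma)$ critical values to such a window. Hence $\#\mathrm{Fix}\,h\ge(c(M),\gamma)/(p(M),\gamma)$, and integrality of $p(M)$ enters exactly as $d\ge 1$. Also, state the Lusternik--Schnirelmann step for multiplication by classical classes of positive degree, not for an arbitrary non-invertible $u$.

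\textbf{The argument only covers Fano $M$.} With $x_i$ the classical divisor classes, Batyrev's presentation computes quantum cohomology only in the Fano case. For non-Fano toric manifolds the relations acquire correction terms, so $x_1^{m_1}*\cdots*x_n^{m_n}$ equals $q^\gamma$ only up to corrections. To go further you would have to show three things:
\begin{itemize}
\item the corrections have strictly positive valuation and so leave spectral invariants unchanged (for instance, by working with Seidel elements of the toric circle actions, whose leading terms are the $x_i$);
\item a Lusternik--Schnirelmann inequality holds for multiplication by such corrected elements;
\item the construction survives via virtual techniques where $M$ is not semipositive.
\end{itemize}
As it stands, you prove (i) only for Fano toric $M$, while the statement covers every compact toric manifold.

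\textbf{Comparison with the paper.} Givental's route avoids all of this. The role of $\prod x_i^{m_i}=q^\gamma$ is played by the isomorphism $\mathcal{U}_m$: multiplication by $u^m$ from level $\nu$ to level $\nu+\langle p,m\rangle$. It comes from the fiberwise suspension $\mathscr{F}_N\oplus_{\Lambda_N}\mathscr{G}_m^K$ and holds for every toric manifold, with no curve counting. The correspondence is as follows:
\begin{itemize}
\item the generators $u_i$ of $H^*_{T^n}(pt)$ replace your divisor classes;
\item proposition (2) replaces the Lusternik--Schnirelmann property;
\item the element $q\notin\mathscr{J}$ with $u_iq\in\mathscr{J}$ plays the role of your starting class $1$.
\end{itemize}
What your route buys is uniformity with Section 3: it is the $\mathbb{CP}^n$ argument with $a^{n+1}=e^A$ replaced by the Batyrev relation.

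\textbf{On (ii).} The paper gives no argument for (ii). Your spectral sequence bounds $\sum_x\operatorname{rank}HF^{loc}(x)$. The ``algebraic'' count of perturbed orbits is, up to sign conventions, the Euler characteristic of $HF^{loc}(x)$, i.e. the Lefschetz index $\iota(x)$, not the rank. In fact a toric manifold has no odd cohomology, so $\chi(M)=\dim H^*(M;\mathbb{C})$. Then (ii) follows directly from the Lefschetz formula $\sum_x\iota(x)=\chi(M)$ (for isolated fixed points; otherwise there is nothing to prove), for any notion of multiplicity bounded below by $|\iota(x)|$.
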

in which $c(M)$ is first chern class and $\mathcal{E}$ is the set of all non-zero effective homology classes, that is, Poincar$\acute{e}$-dual to some holomorphic hypersurfaces.

Toric manifold here means a symplectic quotient of $\mathbb{C}^n$ by a subtorus $T^k\subset T^n$. $T^n$ acts on $\mathbb{C}^n$ by $(a_1,a_2,\dots,a_n)\cdot(z_1,z_2,\dots,z_n) = (a_1z_1,a_2z_2,\dots,a_nz_n)$. 

Givental derives this fixed-point estimate from the following lattice version of the statement. The role of the vector $m$ is to measure how the equivariant cohomology classes shift under the torus action.
\begin{theorem}
    Let m = $(m_1,m_2,\dots,m_n)\in \mathbb{Z}_+^n\cap\mathbb{Z}^k$, $\mathbb{Z}^k$ here is induced by $ LieT^k \subset LieT^n$, be a nonnegative point in $\mathbb{Z}^k\subset\mathbb{Z}^n$, $m_i\geq 0$, $m\neq0$, $p\in\mathbb{Z}^{k*}$ be a primitive regular value of $\pi:\Pi = \{t\in \mathbb{R}^{n*}|t_i\geq0,i = 1,2,\dots,n\}\rightarrow\mathbb{R}^{k*}$. Then for any hamiltonian diffeomorphism $M\rightarrow M$, the number of its fixed points is not less than $(m_1+m_2+\dots +m_n)/\langle p,m\rangle$.
\end{theorem}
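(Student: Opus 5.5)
We follow Givental's strategy and reduce the fixed-point count to a cup-length type estimate in the $T^k$-equivariant Floer theory of the ambient space $\mathbb{C}^n$.

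\textbf{Step 1: setting up the equivariant Floer cohomology.} We realize $M=\mu^{-1}(p)/T^k$, where $\mu:\mathbb{C}^n\to \mathbb{R}^{k*}$ is the composition of the standard moment map $\mathbb{C}^n\to\Pi$ with $\pi$. A Hamiltonian $H_t$ on $M$ lifts to a $T^k$-invariant Hamiltonian on $\mu^{-1}(p)$. Fixed points of $h$ then correspond to $T^k$-orbits of critical points of the action functional on the space of loops in $\mu^{-1}(p)$ whose endpoints differ by an element of $T^k$.

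Following Givental, we pass to the $T^n\times S^1$-equivariant setting. We consider the loop space $L\mathbb{C}^n$ with the Fourier-mode $T^n$-action and the loop-rotation $S^1$-action. We treat the toric space as a ``semi-infinite'' symplectic reduction of $L\mathbb{C}^n$ by the loop group of $T^k$.

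The equivariant Floer cohomology of $M$ is then a module over $H^*(BT^n)=\mathbb{C}[u_1,\dots,u_n]$. The central structural input is that the generators $u_i$ act by the ``shift operators'' $p_i\,e^{\partial/\partial q_i}$. Equivalently, translating a loop by the deck transformation $m\in\mathbb{Z}^k\subset\pi_1(T^k)$ shifts the action by $\langle p,m\rangle$. It also changes the semi-infinite degree by $2(m_1+\dots+m_n)$, the Maslov/Chern index of the corresponding class.

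This is exactly where $m$ enters. It measures how equivariant classes shift under the torus action, as the paper indicates.

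\textbf{Step 2: cup-length argument.} The monomial $u^m=\prod u_i^{m_i}$ restricts on $M$ to a product of $\sum m_i$ divisor classes. By the shift relation, it acts on the filtered Floer complex as the deck transformation by $m$.

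We then run Floer's cap-action argument, the same mechanism used for $\mathbb{CP}^n$ described above. We take the spectral invariants $c(\alpha)=\inf\{a:\alpha \text{ is represented at action level }\le a\}$.

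Suppose $h$ has fewer than $N=(\sum m_i)/\langle p,m\rangle$ fixed points. Then the chain $\alpha, u_1\alpha, u_1u_2\alpha,\dots$ of $\sum m_i$ successive cap products by degree-2 classes $u_i$ yields non-increasing spectral values. These values must be strictly decreasing unless there is a continuum of critical points, in which case there are infinitely many fixed points and nothing remains to prove.

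Over the full word $u^m$, the total drop equals exactly $\langle p,m\rangle$, because $u^m\alpha$ equals the deck translate of $\alpha$. These $\sum m_i$ strictly distinct critical values therefore lie in a window of length $\langle p,m\rangle$ modulo the period lattice of actions.

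Each fixed point contributes one critical value per period translate. A pigeonhole count then gives at least $(\sum m_i)/\langle p,m\rangle$ geometrically distinct fixed points.

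\textbf{Main obstacle.} The hard step is establishing the shift-operator identity on Floer cohomology, $u^m=$ deck translation by $m$, with its action shift $\langle p,m\rangle$.

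This requires computing the $T^k$-equivariant Floer cohomology of the toric reduction. We would try to do this via the reduction of $L\mathbb{C}^n$, where everything is explicit (Fourier modes, quadratic action). We would then need to prove invariance under the lifted Hamiltonian perturbation using compactness on the noncompact space $\mu^{-1}(p)$. The hypothesis that $p$ is a primitive regular value ensures that $M$ is smooth and that the period lattice of the action is generated by $\langle p,\cdot\rangle$. The nonnegativity $m_i\ge 0$ ensures that $u^m$ is a genuine polynomial class, so that the cap action is defined without inverting the $u_i$.

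The second, technical obstacle is the strict decrease of spectral values when critical points are isolated. We would handle it with Floer's local argument: a cap by a positive-degree class with support near a fixed point acts trivially on local Floer cohomology.
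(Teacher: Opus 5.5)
Your closing count (strictly dropping levels, total shift $\langle p,m\rangle$, and one critical value per unit interval per fixed point because $p$ is primitive) is the same as the paper's. The gap is that its two inputs come from two different theories, and neither theory as you set it up supplies both.

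\textbf{The equivariant reading.} The identity ``$u^m$ acts as the shift by $m$'' holds in the lifted $T^k$-equivariant setting, where the $u_i$ are equivariant parameters in $H^2_{T^k}(pt)$. In the paper it is the isomorphism $\mathcal{U}_m:\mathscr{H}^*_{T^k}(F^-(\nu))\to\mathscr{H}^*_{T^k}(F^-(\nu+\langle p,m\rangle))$. It comes from the fiberwise equivalence of $\mathscr{F}_{N+K}|_{\Lambda_N+m}$ with $\mathscr{F}_N\oplus_{\Lambda_N}\mathscr{G}^K_m$ together with $J_{\hat F}=J_F\otimes I^{K+m}$, an equivariant Thom isomorphism that produces the monomial $u^{K+m}$.

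In that setting your strict-drop mechanism does not apply. An element of $H^{>0}_{T^k}(pt)$ has no cycle representative that can be pushed off a fixed point. The paper replaces this with its property (2): critical sets are free $T^k$-orbits, and on their equivariant cohomology $H^*(pt)$ every positive-degree element of $H^*_{T^k}(pt)$ acts by zero.

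The paper also works level by level with the modules $\mathscr{H}^*_{T^k}(F^-(\nu))$ and their kernels $\mathscr{J}^*(F^-(\nu))$. It starts the chain from some $q\notin\mathscr{J}$ with all $u_iq\in\mathscr{J}$, which exists because $\mathbb{C}[u]/(I+\mathbb{C}[u]\cap J_r)$ is finite dimensional. Your plan has no counterpart of this input.

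\textbf{The non-equivariant reading.} Suppose instead you read $u_i$ as quantum cap product with the toric divisor class $D_i$ on $M$, as your Step 2 does (``restricts on $M$ to divisor classes'', ``support near a fixed point''). Then spectral invariants and the local argument are fine. But the needed identity becomes Batyrev's quantum Stanley--Reisner relation $D_1^{*m_1}*\cdots*D_n^{*m_n}=e^{A_m}$. This holds for Fano toric manifolds but not in general, while the theorem covers every compact toric $M=\mathbb{C}^n/\!/_pT^k$.

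Take the Hirzebruch surface $\mathbb{F}_2$. Let $E$ be the $(-2)$-section, $F$ the fiber, $D_\infty$ the opposite section, and $m=(D_i\cdot F)_i$, so that $u^m$ corresponds to $D_E D_\infty$. The divisor axiom plus deformation invariance to $S^2\times S^2$ give $D_E*D_\infty=e^F-e^{E+F}$, not $e^F$. Here the extra factor $1-e^E$ is a unit with leading term $1$, so spectral invariants do not see it and your count survives. In general, though, you would have to prove that the word in the $D_i$ lowers spectral invariants by at most $\langle p,m\rangle$ on every non-Fano toric manifold. You would also need Floer and quantum-cap foundations for non-semipositive toric manifolds.

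Givental's finite-dimensional generating-function route avoids both problems, which is why it works for all compact toric manifolds. As written, the step you call the ``main obstacle'' is the content of the theorem, not a technical lemma.
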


This result is very powerful because it will hold for all Hamiltonians without any assumptions. But the limitation lies in the fact that this estimate is weaker than cuplength estimate. Moreover, the class $p$ is highly dependent on the symplectic structure on M.

Then we introduce some notations used by Givental

$\mathbb{C}[u]:=\mathbb{C}[u_1,\dots,u_n]$, $\mathbb{C}[u,u^{-1}]:=\mathbb{C}[u_1,\dots,u_n,u_1^{-1},\dots,u_n^{-1}]$

I = the ideal in $\mathbb{C}[u] = \mathbb{C}[u_1,u_2,\dots,u_n]$ generated by linear equations of $\mathbb{R}^k = LieT^k \subset LieT^n = \mathbb{R}^n$

$J_r$ = $(u^m,m\in \mathbb{Z}^k|\langle p,m\rangle\geq r)_{\mathbb{C}[u]}$, here p is a primitive regular value of $\pi:\Pi \rightarrow\mathbb{R}^{k*}$ like above.

$\mathscr{J}_r$ means the image of $J_r$ in the quotient algebra $\mathscr{R} = \mathbb{C}[u,u^{-1}]/\mathbb{C}[u,u^{-1}]I$,

$\mathscr{J}$ is a $\mathbb{C}[u]$ submodule such that $\mathscr{J}_{r_+}\subset\mathscr{J}\subset\mathscr{J}_{r_-}$ for some $r_+>r_-$.

A proposition proved by Givental is that the quotient algebra $\mathbb{C}[u]/(I+\mathbb{C}[u]\cap J_r)$ is finite dimensional. A direct corollary of this proposition is there exists $q\in\mathcal{R}$ such that $q\notin \mathscr{J}$ but $u_1q,u_2q,\dots,u_nq\in\mathscr{J}$. We refer to \cite{Givental} for the proof. And the following sketch of proof also follows \cite{Givental}.

Following Fortune and Weinstein's idea\cite{BFAW}, we consider that any hamiltoian isotopy $h^t$ of a compact toric manifold $M = \mathbb{C}^n//T^k$ can be lifted up to  a $T^k$-equivariant homogeneous hamiltonian isotopy in $\mathbb{C}^n$(we also use notation $h^t$ to express this isotopy on $\mathbb{C}^n$).

Now let $\mathcal{H}^t$ denote the hamiltonian w.r.t $h^t$ on $\mathbb{C}^n\times [0,1]$, we define the action functional 
\begin{align*}
    &\mathcal{A}:\mathcal{L}\mathbb{C}^n\times\mathbb{R}^k\rightarrow\mathbb{R}\\
    &\mathcal{A} = \oint pdq - \oint \mathcal{H}^tdt-\lambda_1\oint \mathcal{P}_1dt-\dots-\lambda_k\oint \mathcal{P}_kdt
\end{align*}
$\mathcal{P} = (\mathcal{P}_1,\dots,\mathcal{P}_k)$ are components of the momentum map, $\lambda = (\lambda_1,\dots,\lambda_k)\in\mathbb{R}^k$ are Lagrange multipliers. Denote $\mathcal{A}|_{\mathcal{L}\mathbb{C}^n\times\{\lambda\}}$ by $\mathcal{A}_\lambda$, note that the reason we introduce Lagrangian multipliers here is the lift of a trajectory with respect to fixed point of $h^1$ on $M$ should not be closed under hamiltonian flow of $\mathcal{H}^t$. So we need to add some factors to $\mathcal{H}^t$ to ensure that it is lifted to a loop in $\mathbb{C}^n$. 

Let $\mathcal{S}$ denote the sphere of all rays in the loop space $\mathcal{L}\mathbb{C}^n\backslash$\{zero loop\}. Let $A = [\mathcal{A}^{-1}(0)\backslash (0\times \mathbb{R}^k)]/\mathbb{R}^\times_+\subset\mathcal{S}\times\mathbb{R}^k$, pull back $p:\mathbb{R}^k\rightarrow\mathbb{R}$ to $\mathcal{S}\times\mathbb{R}^k$ and restrict it on $A$, denote it by $\hat{p}:A\rightarrow\mathbb{R}$.

Then there is a one to one correspondence between fixed points of $h:M\rightarrow M$ and $\mathbb{Z}^k$-lattices of critical $T^k$-orbits of the function $\hat{p}$.

The generating function method is a central idea in Givental's proof of the symplectic fixed point theorem, which offers a significant simplification over traditional methods such as Floer homology. The method works by translating the problem of counting fixed points of a Hamiltonian diffeomorphism into a problem of finding critical points of a suitable generating function, which is defined on a finite-dimensional space. This allows the infinite-dimensional problem of fixed point counting to be approximated and ultimately solved in a more algebraically tractable manner.

The first key idea is to approximate the infinite-dimensional loop space of the symplectic manifold by a finite-dimensional space. That is, we can decompose $h^1$ into several hamiltonians with small Hofer energy, $h^1 = h_N\circ h_{N-1}\circ\dots\circ h_1$. Thus we can define two symplectic morphisms on $M^N = (M\times\dots\times M,\omega\oplus\dots\oplus\omega)$ to itself:
\begin{align*}
    &h^N = (h_1,h_2,\dots,h_N) \\
    &q^N:(x_1,x_2,\dots,x_N)\rightarrow(x_2,\dots,x_N,x_1).
\end{align*}
 Denoted by $\mathscr{H}$ and $\mathscr{Q}$ the generating function of $h^N$ and $q^N$. Let $\mathscr{F} = \mathscr{H}-\mathscr{Q}$, the fixed points of $h^1$ is one to one correspond to critical point of $\mathscr{F}$. To make sure the graph of $q^N$ is a Lagrangian subspace, we should replace $h^1$ by $-id\circ h^1$ and $q^N$ by $q^N(x_1,x_2,\dots,x_N) = (x_2,\dots,x_N,-x_1)$. Moreover, we will replace $N$ by even number $2N$.

This $\mathscr{F}$ is generating function of $\mathcal{A}_0$, we can find generationg function for any $\mathcal{A}_\lambda$, $\lambda\in$ the Lie algebra $\mathbb{R}^k$ by replacing $h_{2N}\circ h_{2N-1}\circ\dots\circ h_1$ by 
\begin{center}
    $h_{2N_2}\circ h_{2N_2-1}\circ\dots\circ h_1\circ \underbrace{t\circ t\circ\dots\circ t}_{2N_1times}$,
\end{center}
here $t = exp(\lambda/2N_1)$ is a sufficient small hamitonian diffeomorphism. Then we can get $\mathscr{F}_\lambda^N:(\mathbb{C}^n)^{2N}\rightarrow\mathbb{R}$. But $N = N_1+N_2$ is dependent on $\lambda$, so we can only use a uniform N for $\lambda$ in a compact cube $\Lambda_N$. Denote it by 
\begin{center}
    $\mathscr{F}_N:(\mathbb{C}^n)^{2N}\times\Lambda_N\rightarrow \mathbb{R}$, $\Lambda_N\subset\mathbb{R}^k$, $\cup\Lambda_N = \mathbb{R}^k$.
\end{center}
Now let $\mathcal{S}_N$ be the unit sphere in $\mathbb{C}^{2nN}$, we can split it into two parts by value of $\mathscr{F}_N$
\begin{center}
    $F_N^{\pm} = \{(x,\lambda)\in\mathcal{S}_N\times\Lambda_N|\mathscr{F}_\lambda^N(x)\geq0(resp.\ \leq 0)\}$.
\end{center}
and their intersection is 
\begin{center}
    $F_N^0 = (\mathscr{F}_N^{-1}(0)\backslash\{0\}\times\Lambda_N)/\mathbb{R}_+^\times = F_N^+\cap F_N^-$.
\end{center}
And denote $\hat{p}_N:F_N^0\subset\mathcal{S}_N\times\Lambda_N\rightarrow\Lambda_N\subset\mathbb{R}^k\xrightarrow{p}\mathbb{R}$, critical $T^k$ orbits correspond(not one to one) to fixed points of $-id\circ h^1$ on $M$. If we denote $\mathscr{G}_m^K:\mathbb{C}^{2nK}\rightarrow\mathbb{R}$ the generating function of $-id\circ\underbrace{t\circ\dots\circ t}_{2Ktimes}$, $t = exp(m/2K)\in T^k$, $m\in \mathbb{Z}^k$ and $\Lambda_N+m\subset\Lambda_{N+K}$. A natural result we can expect is that $\mathscr{F}_{N+K}|_{\Lambda_N+m}$ is fiberwise homotopy equivalent to the fiberwise suspension $\mathscr{F}_N\oplus_{\Lambda_N}\mathscr{G}_m^K$. 

Then we consider equivariant cohomology defined by $H^*_G(X) = H^*(X_G = X\times EG/G)$ where $X$ has a G-action on it. Let $\partial F_N^\pm = F_N^\pm\cap (\mathcal{S}_N\times\partial\Lambda_N)$ and we will deal with $H^*_{T^k}(F_N^-,\partial F_N^-)$. In the following part, we do not perform any computations. We refer more details of computations to \cite{Givental}, and we just introduce the consequences here.

Let $\hat{\mathscr{F}}_{N+K} = \mathscr{F}_N\oplus_{\Lambda_N}\mathscr{G}_m^K$, it was computed that
\begin{align}\label{formula1}
    \tilde{H}^*_{T^k}(\hat{F}_{N+K}^-,\partial \hat{F}_{N+K}^-)& = \tilde{H}^*_{T^k}(F_N^-,\partial F_N^-)\otimes I^{K+m},\nonumber
    \\
    J_{\hat{F}} & = J_F\otimes I^{K+m},
\end{align}
$I^{K+m}$ is the principal ideal in $H^*_{T^k}(pt)$ generated by the monomial $u^{K+m} = u_1^{K+m_1}\cdot u_2^{K+m_2}\cdot{} \dots{} \cdot u_n^{K+m_n}$. Note that $H^*_{T^k}(pt)$ is a quotient of $H^*_{T^n}(pt) = \mathbb{C}[u_1,u_2,\dots,u_n]$.

With these propositions, we can form an asymptotic computation of $H^*_{T^k}(F^-,\partial F^-)$ generated by $\mathscr{F}_N$ because they form a direct system. This computation is the key idea of Givental's proof, it finds a method to describe the fixed points using algebraic structure. This is also the reason why it is hard to extend to other manifolds, namely, this algebraic structure is very special and can not be realized on any symplectic manifolds. 

Restrict $\mathscr{F}_N$ on a convex compact r-dimensional polyhedron $\Gamma\subset \mathbb{R}^n$ and 
\begin{align*}
    \mathscr{H}^*_{T^n}(\Gamma) &= \lim\limits_{N\rightarrow\infty}u^{-N}H^{*+2N}_{T^n}(F^-_N|_\Gamma,\partial F^-_N|_\Gamma)\\
    & = H^r(\Gamma,\partial\Gamma)\otimes_{\mathbb{C}}\mathbb{C}[u,u^{-1}]/J_{\Gamma}, 
\end{align*}
where $J_\Gamma$ is the $\mathbb{C}[u]$ submodule in $\mathbb{C}[u,u^{-1}]$ generated by $u^l$, $l\in\Delta_\Gamma = \Gamma+\mathbb{R}_+^n = \{\mu\in\mathbb{R}^n|\exists\gamma\in\Gamma,\ \mu_i>\gamma_i,\ i = 1,2,\dots,n \}$. 

Choose $\Gamma_N(\nu) = \Lambda_N\cap p^{-1}(\nu)$ for some fixed $\nu\in\mathbb{R}$, denote by $\mathscr{H}^*_{T^{k}}(\Gamma(\nu))$ the limit
\begin{center}
    $\lim\limits_{N\rightarrow\infty}\mathscr{H}^*_{T^{k}}(\Gamma_N(\nu)) = H^{k-1}(\Gamma(\nu),\infty)\otimes_{\mathbb{C}}\mathscr{R}/\mathscr{J}_{\Gamma(\nu)}$.
\end{center}
and denote $\mathscr{H}^*_{T^k}(F^-(\nu))$ the direct limit of 
\begin{center}
    $H^{*+2N}_{T^k}(F^-_N(\nu),\partial F^-_N(\nu))\rightarrow H^{*+2N'}_{T^k}(F^-_{N'}(\nu)|_{\Gamma_N(\nu)},F^-_{N'}(\nu)|_{\partial\Gamma_N(\nu)})$.
\end{center}
There are natural maps 
\begin{center}
    $H^{*+2N}_{T^k}(\Gamma_N,\partial\Gamma_N)\rightarrow H^{*+2N}_{T^k}(F^-_N(\nu),\partial F^-_N(\nu))$,
\end{center}
so there is a map between these two direct limits
\begin{center}
    $\mathscr{H}^*_{T^{k}}(\Gamma(\nu))\rightarrow \mathscr{H}^*_{T^k}(F^-(\nu))$.
\end{center}
We denote its kernel by $\mathscr{J}^*(F^-(\nu))$. Because $\mathscr{F}_{N+K}|_{\Lambda_N+m}$ is fiberwise homotopy equivalent to the fiberwise suspension $\mathscr{F}_N\oplus_{\Lambda_N}\mathscr{G}_m^K$, together with \ref{formula1},  there is an isomorphism 
\begin{center}
    $\mathcal{U}_m:\mathscr{H}^*_{T^k}(F^-(\nu))\rightarrow\mathscr{H}^*_{T^k}(F^-(\nu+p(m)))$.
\end{center}
Therefore it induces an isomorphsim on $\mathscr{J}^*(F^-(\nu))$. 

We should remark here for general generating function, we decompose $h$ into $h_1\circ\dots\circ h_N$, and if we compute the Floer cohomology $HF^*(H,Q)$, it has the same critical points as $HF^*(G_h,\Delta)$ where $G_h$ is the graph of $h$ in $M\times M$ and $H$, $Q$ means the graph of $h^N$, $q^N$ above. Although the ring structure of them is not obviously the same, it was believed by author. And Givental use the generating function with a Lie group action, which is also believed that it could be translate to the language of Floer homology.

The isomorphism $\mathcal{U}_m$ is just multiplication of $u^m$ on $\mathscr{R}$, that means $\mathscr{J}^*(F^-(\nu))$ is invariant under this multiplication if $p(m) = 0$. One only need one more proposition to finish the whole proof, that is

(1) If there is no critical values of $\hat{p}$ in $[\nu_0,\nu_1]$, then $\mathscr{H}^*_{T^k}(F^-(\nu_0))\cong\mathscr{H}^*_{T^k}(F^-(\nu_1))$.

(2) If there is only one critical value of $\hat{p}$ in $[\nu_0,\nu_1]$ and all critical $T^k$-orbits on this level are isolated. Let $v\in H^*_{T^k}(pt)$ be a positive degree element and $q_0\in\mathscr{H}^*_{T^k}(F^-(\nu_0))$, $q_1\in\mathscr{H}^*_{T^k}(F^-(\nu_1))$ be images of same $q\in\mathscr{R}$ under $\mathscr{H}^*_{T^{k}}(\Gamma(\nu))\rightarrow \mathscr{H}^*_{T^k}(F^-(\nu))$, then $q_0 = 0$ implies $vq_1 = 0$.

Now, we are going to count critical values of $\hat{p}:A\rightarrow\mathbb{R}$ between two non-singular values $\nu$ and $\nu+1$. 

Use the corollary above to $\mathscr{J} = \mathscr{J}^*(F^-(\nu))$, we can find some $q\in\mathscr{R}$, $q\notin \mathscr{J}$ but $u_1q,u_2q,\dots,u_nq\in\mathscr{J}$. If the total number of fixed points less than $m_1+\dots+ m_n$, by (2) we get $u_1^{m_1}u_2^{m_2}\cdots u_n^{m_n}q$ is 0 in $\mathscr{H}^*_{T^k}(F^-(\nu+\langle p,m\rangle))$, thus lies in $\mathscr{J}^*(F^-(\nu+\langle p,m\rangle))$. On the other hand, this multiplication is an isomorphism of $\mathscr{H}^*_{T^k}(F^-(\nu))$ and $\mathscr{H}^*_{T^k}(F^-(\nu+\langle p,m\rangle))$, but $q\notin \mathscr{J}^*_{T^k}(F^-(\nu))$, this contradiction completes the proof.

This work contains Floer's result about projective manifolds. As an example, for $\prod\mathbb{CP}^{n_i}$, Givental's work can give result $\max \{n_i\}$, which is much stronger than the result $gcd(n_i)$ given by Floer.  

Notice that the value $\max\limits_{\mathcal{E}}(c(M),\gamma)/(p(M),\gamma)$ is not equal to cuplength. For example, If  $M = S^2\times S^2$, the cuplength is 2, so cuplength estimate said it will have 3 fixed points but $\max\limits_{\mathcal{E}}(c(M),\gamma)/(p(M),\gamma)$ is at most 2. If the symplectic area of two factors has fractional ratio, the result is 1, and it can give nothing if this ratio is irrational.

In example $M = $ 1 point blow-up of $\mathbb{CP}^n$, it is similar to the above case. Namely, the result is less than cuplength if the ratio of two factors is fractional and get nothing if the ratio is irrational.

There is a very natural observation, this generating function method can be repeated for any symplectic quotient $Y = X//G$, but the obstruction here is to find how the group $H_G^*(pt)$ acts on
$\mathscr{H}^*_{G}(F^-(\nu))$, there should be something interesting but things will be more complicate if we consider general objects. We can guess that if $H_G^*(pt) = H^*(G)$ has cuplength $k$, then $H$ be any Hamiltonians on quotient type symplectic manifold $Y = X//G$, $\psi_H^1$ will have at least $k+1$ fixed points.

\subsection{An estimate theorem dependent on Hofer energy}
In 1997, inspired by Givental and Floer's result, Matthias Schwarz introduced the action of quantum-cohomology on the Floer chain complex and proved the case when the hamiltonian  is not too big in his work\cite{Schwarz}. The following sketch of proof is mainly follow this work. The concept of quantum cohomology and quantum cup length will be introduced in Chapter 3.

\begin{theorem}\label{Schwarzthm}(Schwarz)

    Let $\psi$ be a Hamiltonian automorphism of a closed symplectic manifold (M,$\omega$) satisfying the rationality condition $\omega(\pi_2(M))=p\cdot \mathbb{Z},p>0$. If $d(\psi,id)\leq p$ then 
    \begin{center}
        $\# Fix\ \psi\geq cl(M)$
    \end{center}
    $d(\psi,id) := \inf\{||H||\ |\psi\ is\ generated\ by\ H\}$ and $cl(M)$ is cup-length of M. Moreover if M is weakly monotone(i.e. semi-positive)
    then for every $\psi$ we have
    \begin{center}
        $\# Fix\ \psi\geq\sup\limits_{A\in \pi_2(M)}\frac{p\ qcl(A)}{p+||\psi||+\omega(A)}$
    \end{center}
    where qcl(A) is quantum cup-length w.r.t quantum cup $*_A$, $||\psi|| = \inf\{||H||_1|\ \psi = \psi_H^1\}$.
\end{theorem}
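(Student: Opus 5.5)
The plan follows Schwarz's action-filtered Lusternik--Schnirelmann argument in Floer theory: spectral invariants combined with the pair-of-pants (quantum cap) action.

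\emph{Setup.} For a Hamiltonian $H$ generating $\psi$, consider the filtered Floer homology $HF^{<a}(H)$. Via the PSS isomorphism, to each nonzero class $\alpha\in QH^*(M)$ (or its Poincar\'e dual in $H_*(M)$) we associate the spectral value
\[
c(\alpha,H)=\inf\{a \mid \mathrm{PSS}(\alpha^\vee)\in \mathrm{im}(HF^{<a}(H)\to HF(H))\}.
\]
This value is a critical value of $\mathcal{A}_H$ on the Novikov cover. We assume first that $\psi$ has only finitely many fixed points, since otherwise there is nothing to prove. Degenerate $H$ are handled by perturbing to a nondegenerate $H'$ near $H$ and passing to the limit, using Lipschitz continuity
\[
|c(\alpha,H)-c(\alpha,K)|\le \|H-K\|.
\]
Two estimates are needed.

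\emph{Step 1 (monotonicity under cap product).} For $\beta\in H^{*}(M)$ of positive degree,
\[
c(\beta * \alpha,H)\le c(\alpha,H),
\]
where $*$ is the quantum product acting on Floer chains by counting pair-of-pants trajectories with a marked point constrained to a cycle dual to $\beta$. Since this action does not increase action, the inequality follows.

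\emph{Step 2 (strict inequality, the key step).} Suppose $c(\beta*\alpha,H)=c(\alpha,H)$ with $\deg\beta>0$. Then the set of fixed points at that action level, viewed as a subset $\Sigma\subset M$ via $\gamma\mapsto\gamma(0)$, must be cohomologically nontrivial. Concretely, if $\beta|_U=0$ on a neighbourhood $U$ of $\Sigma$, represent $\beta$ by a cycle disjoint from $U$. The cap action then strictly lowers the level: trajectories starting and ending at the critical level have zero energy, so they are constant, and they must meet the cycle, which is impossible. This forces strict inequality. Since finitely many fixed points form a cohomologically trivial set (every positive-degree class restricts to zero on a finite set), we obtain
\[
c(\beta*\alpha,H)<c(\alpha,H).
\]

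\emph{Counting, classical case.} Take $\beta_1,\dots,\beta_k$ of positive degree with $\beta_1\cup\dots\cup\beta_k\neq0$, where $k+1=cl(M)$ (matching the paper's convention). Consider the chain
\[
[M],\quad \beta_k\cap[M],\quad \dots,\quad \beta_1\cup\dots\cup\beta_k\cap[M].
\]
Two issues must be controlled. First, quantum corrections in $\beta*$ involve spheres $A$ with $\omega(A)\ge p$. Second, the Novikov cover means that distinct critical values might be lifts of the same orbit differing by multiples of $p$. The hypothesis $d(\psi,id)\le p$ handles both. Spectral invariants of nonzero classes in the classical sector all lie in an interval of length at most $\|H\|$, since $c(\cdot,0)$ equals the Morse values in $[\min,\max]$ after $C^0$-small adjustment, and the Lipschitz bound applies. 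Hence the strictly decreasing values obtained from Step 2 lie in a window of length $\le\|H\|\le p$. Two distinct values in such a window cannot come from the same orbit, which would need an action difference in $p\mathbb{Z}\setminus\{0\}$; the endpoint case is handled by the strictness in Step 2 together with the infimum over $H$. Moreover, quantum contributions shift action by at least $p$, so they fall outside the window and $*$ may be replaced by $\cup$. This gives $cl(M)$ distinct orbits.

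\emph{Semi-positive case.} Here we use $*_A$ iterates of length $qcl(A)$. After $j$ steps, the accumulated action shift is bounded by $\|\psi\|+\omega(A)$ per turn around the spectrum. Dividing the total drop of length $qcl(A)$ into windows of size $p+\|\psi\|+\omega(A)$, each window contains distinct orbits modulo $p$, which gives the ratio bound
\[
\#\,\mathrm{Fix}\,\psi\ \ge\ \frac{p\ qcl(A)}{p+\|\psi\|+\omega(A)}.
\]
Semi-positivity is used only to define the quantum product and PSS by standard transversality.

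\emph{Main obstacle.} The hard step is Step 2 in the degenerate setting: proving strict decrease when the critical set at a level is cohomologically trivial. This requires a local Floer homology near isolated degenerate orbits, or a limiting argument over nondegenerate perturbations with compactness of the cap-action trajectories that avoid a cycle supported away from $\Sigma$. My first attempt would be the compactness route: suppose equality holds along perturbations $H_\nu\to H$, extract broken trajectories of energy tending to zero that converge to constant orbits at $\Sigma$, and contradict the disjointness of the cycle from $U$.
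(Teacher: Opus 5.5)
\textbf{The genuine gap is in the semi-positive statement.} Your classical estimate is a workable alternative to the paper's route, which follows Schwarz: stretched cylinders in $\mathcal{M}_A((k+1)R_n)$ with $R_n\to\infty$, the broken limit $x_1\ll x_1'\le\cdots\le x_k\ll x_k'$, and the energy bound $E(u)\le\omega(A)+\|H\|$. In your version, spectral invariants replace the neck-stretching. The step you flag as the main obstacle is exactly what Lemma~\ref{lemma} supplies. Only Floer trajectories of energy $>\delta$ meet a cycle dual to $\beta$ that avoids the finitely many fixed points, and this holds uniformly for $H'$ near $H$. Hence $c(\beta*\alpha,H')\le c(\alpha,H')-\delta$, and strictness survives the limit $H'\to H$.

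The semi-positive bound rests on one fact: the $qcl(A)=k+1$ strictly decreasing values $c([M],H)>c(PD(\alpha_k),H)>\cdots>c(PD(\alpha_1*\cdots*\alpha_k),H)$ lie in a single interval of length $\|H\|+\omega(A)$. Your sketch never produces this.
\begin{itemize}
\item ``Accumulated action shift bounded by $\|\psi\|+\omega(A)$ per turn around the spectrum'' is not an estimate.
\item ``Each window of size $p+\|\psi\|+\omega(A)$ contains distinct orbits modulo $p$'' is false as stated, since a window longer than $p$ can contain several cappings of one orbit.
\end{itemize}
The upper end $c([M],H)\le\int_0^1\max_M H_t\,dt$ is easy. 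What is missing is the lower bound
\[
c\bigl(PD(\alpha_1*\cdots*\alpha_k),H\bigr)\ \ge\ \int_0^1\min_M H_t\,dt-\omega(A),
\]
in conventions where $e^A$ lowers action by $\omega(A)$. This bound has to come from the nonvanishing of the $e^A$-coefficient. One way is continuation to a $C^2$-small Morse function $\epsilon f$: there the $e^A$-component of any cycle representing the class is a nonzero Morse cycle, so its level is at least $\epsilon\min f-\omega(A)$. This is the spectral counterpart of the paper's bound $E(u)\le\omega(A)+\|H\|$. Once you have it, each orbit contributes at most $\lfloor(\|H\|+\omega(A))/p\rfloor+1\le(p+\|H\|+\omega(A))/p$ values to the window. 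Taking the infimum over $H$ then gives the stated ratio.

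\textbf{Two further points in the first statement need repair.}

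First, you use PSS, spectral invariants and the quantum module structure on $HF$ for an arbitrary rational $M$, while asserting that semi-positivity only enters the second statement. As written, this needs virtual techniques. Schwarz avoids them because for $A=0$ and $\|H\|<p$ every relevant cylinder has energy $<p$, so no sphere can bubble off. You would have to import that observation, for example by working with filtered Floer homology in action windows of width $<p$.

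Second, your claim that the endpoint $d(\psi,\mathrm{id})=p$ is ``handled by the strictness in Step 2 together with the infimum over $H$'' is not an argument.
\begin{itemize}
\item The gap $\delta$ from Step 2 depends on $H$ and is not uniform along a minimizing sequence.
\item Even for a single $H$ with $\|H\|=p$, you must rule out one case. There, your largest and smallest values equal $\int_0^1\max_M H_t\,dt$ and $\int_0^1\min_M H_t\,dt$ and come from the same orbit with cappings differing by $p$.
\end{itemize}
As it stands, your argument proves the first statement only for $d(\psi,\mathrm{id})<p$.
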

It's easy to find that this estimate is dependent on the energy of H, although it will work without any assumption about the symplectic manifold itself.

 Schwarz use an alternative version of quantum cohomology induced by Morse theory. That is, for a given Morse function $f$ on $M$, define the chain complex by
\begin{center}
    $QC^k(f, \mathbb{Z}_2) = \{ \varphi \in \mathbb{Z}_2^{(\text{Crit } f \times \Gamma)_k} \mid$ \\ $
    \ \#\{(x, A) \mid \varphi(x, A) \neq 0,\, \omega(A) \leq r\} < \infty \text{ for all } r \in \mathbb{R} \}$,\\
    $(\text{Crit } f \times \Gamma)_k = \{(x, A) \in \text{Crit } f \times \Gamma \mid \mu(x) + 2c_1(A) = k\} $,
\end{center}
$\Gamma$ here means the group $H_2(M)/(\ker\omega\cap\ker c_1)$. We have $QC^*(f, \mathbb{Z}_2) = C^*(f,\mathbb{Z}_2)\otimes_{\mathbb{Z}_2}\Lambda$ as graded rings, $\Lambda$ is the Novikov ring 
\begin{center}
    $\Lambda(\Gamma, \mathbb{Z}_2) = \left\{ \lambda \in \mathbb{Z}_2^\Gamma \middle| \#\{A \in \Gamma \mid \lambda(A) \neq 0, \omega(A) \leq r\} < \infty \text{ f.a. } r \in \mathbb{R} \right\}$.
\end{center}
The coboundary operator is defined by $\delta\otimes id$ where $\delta$ is the coboundary operator of $C^*(f,\mathbb{Z}^2)$.

The deformed cup product on it is defined in the following way. $(f,g_0)$ is the Morse pair on M, let $(g_s)_{s\in\mathbb{R}}$ be a family of Riemannian metrics on M s.t. $g_s = g_0,|s|\geq 1$. For any $x\in Crit(f)$
\begin{center}
    $W^u(x) = \left\{ \gamma \in C^\infty((-\infty, 0], M) \,\middle|\, \dot{\gamma}(s) + \nabla_{g_s}f(\gamma(s)) = 0,\, \gamma(-\infty) = x \right\}, $  \\  
    $W^s(x) = \left\{ \gamma \in C^\infty([0, \infty), M) \,\middle|\, \dot{\gamma}(s) + \nabla_{g_s}f(\gamma(s)) = 0,\, \gamma(+\infty) = x \right\}.$
\end{center}
Let $\mathcal{M}^A_3(J)$ be the moduli space of $J-$holomorphic  spheres representing class $A$ with three marked points $0,1,\infty$. Given $x,y,z\in M$, we define $\mathcal{M}^A_{z;x,y}(J,f,g^1,g^2,g^3) = \mathcal{M}^A_3(J)\times_{ev}(W^s(x)\times W^s(y)\times W^u(z))$, $ev(u) = (u(0),u(1),u(\infty))$ $ev(\gamma^1,\gamma^1,\gamma^3) = (\gamma^1(0),\gamma^2(0),\gamma^3(0))$. Note that for every un-/stable manifold, we allow it to use different metric families. 

If $M$ is weakly monotone, then $\mathcal{M}^A_{z;x,y}(J,f,g^1,g^2,g^3)$ is compact in dimension zero for generic $J$ and $(g^i)_s$. Thus we can define 
\begin{center}
    $n_A(z; x, y) = \#\mathcal{M}_{z,x,y}^A \pmod{2} \quad \text{for } \mu(z) + 2c_1( A )= \mu(x) + \mu(y)$\\
    $m_A : C^k(f, \mathbb{Z}_2) \otimes C^l(f, \mathbb{Z}_2) \to C^{k+l-\deg A}(f, \mathbb{Z}_2)$ \\
    $m_A(x, y) = \sum_{z \in \text{Crit}_{k+l-\deg A}} n_A(z; x, y) z \,.$
\end{center}
The operator $m_A$ is commute with the coboundary operator, so it will induce an operator on cohomology level, we denote it by $*_A$. In fact, this is the same operator as what we defined in chapter 2. And we can define the quantum cup product  
\begin{center}
    $a*b = \sum\limits_{A\in \Gamma}a*_Ab\cdot e^A$
\end{center}
on $H^*(M)$ and extend it linearly onto $QH^*(M)$. Similarly, we can define 
\begin{center}
    $\mathcal{M}^A_{x_0;x_1,\dots,x_k}(J,f,(g^i)_s) = \mathcal{M}^A_{k+1}(J)\times_{ev}(W^u_{g^0}(x_0)\times W^s_{g^1}(x_1)\times\dots\times W^s_{g^k}(x_k))$.
\end{center}
Elements in this moduli space looks like figure\ref{fig:elment1}

\begin{figure}[t]
\begin{center}
\includegraphics[width=0.6\textwidth]{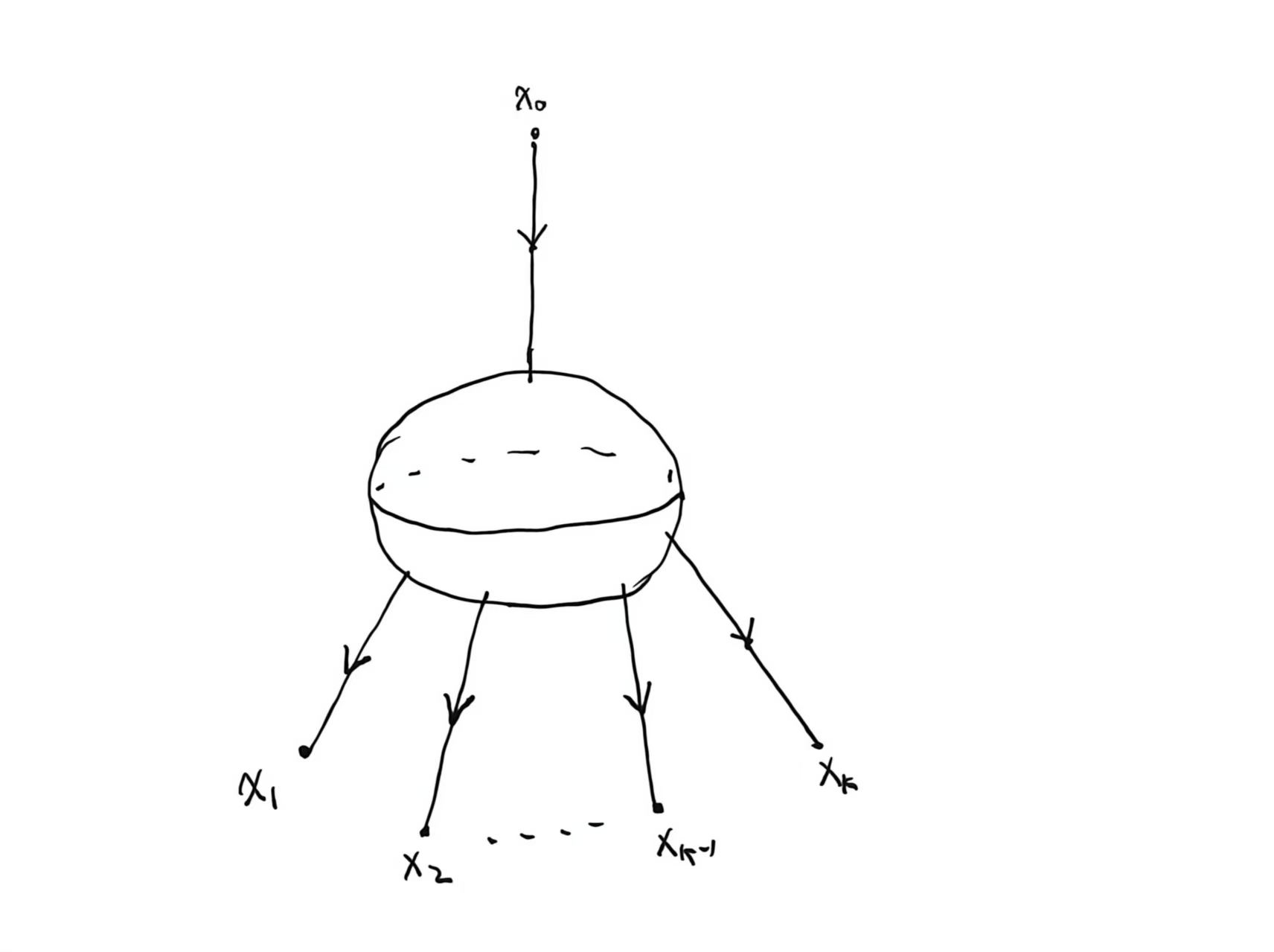}
\caption{Elements in $\mathcal{M}^A_{x_0;x_1,\dots,x_k}(J,f,(g^i)_s)$.}
\label{fig:elment1}
\end{center}
\end{figure}

It will induce the operator
\begin{center}
    $m_A : C^{i_1}(f) \otimes \dots \otimes C^{i_k}(f) \to C^{i_1 + \dots + i_k - \deg A}(f) $, \\
    $m_A(x_1 \otimes \dots \otimes x_k) = \sum\limits_{x_0} (\# \mathcal{M}_{x_0;x_1,\dots,x_k}^A(J, f, (g^i)_s)\ \mod2)  \ x_0 $.
\end{center}
Of course it will induce an operator on the cohomology level with the property
\begin{center}
    $\alpha_1 * \ldots * \alpha_k = \sum\limits_{A \in \Gamma} \mathfrak{m}_A(\alpha_1 \otimes \ldots \otimes \alpha_k) e^A$.
\end{center}

The key idea of Schwarz is to find a sequence of nontrivial elements in this formula and estimate the number of periodic loops on the limit broken trajectory.To realize this goal, one should make this moduli space easier to deal with, so an alternative moduli space is introduced here.
\begin{center}
    $\mathcal{M}_A(R) = \{ u \in C^\infty(\mathbb{R} \times S^1, M) \mid \overline{\partial}_R(u) := 0, \, E_R(u) < \infty, \, [u] = A \}$,\\
    $(\overline{\partial}_R u)(s,t) = \partial_s u + J_R(s,t,u) \left( \partial_t u - X_{H_R}(s,t,u) \right) \in C^\infty(u^* TM)$,\\
    $E_R(u) = \int\limits_{-\infty}^{\infty}\int_{S^1}|\partial_su|^2_{J_R}dsdt$.
\end{center}
Let $\beta:\mathbb{R}\rightarrow [0,1]$ a smooth monotone cutoff function, $\beta(s) = 
\begin{cases} 
0, & s \le 0 \\
1, & s \ge 1 
\end{cases}$.
Here $(J_R,H_R)$ is defined by $(J_R, H_R)(s, t, p) = $
\begin{center}
    $\begin{cases} 
    (J_0(p), 0), & s \le 0, \\
    (\overline{J}(s, t, p), \beta(s) H(t, p)), & 0 < s \le R, \\
    (\overline{J}(R + 1 - s, t, p), \beta(R + 1 - s) H(t, p)), & R < s \le R + 1, \\
    (J_0(p), 0), & R + 1 < s.
\end{cases}$
\end{center}
Where $\overline{J}(s,t,p) = \begin{cases}
    J_0, s\leq 0\\
    J, s\geq 1
\end{cases}$ for some fixed time-independent $J_0$ and our original $J$. Because we can regard $\mathbb{R}\times S^1$ as $S^2\backslash\{+\infty,-\infty\}$, and $u\in\mathcal{M}_A(R)$ has finite energy, we can use removal of singularity to extend u onto $S^2$, that means $[u]$ is a well-defined class in $H_2(M)$. Using this space instead of $\mathcal{M}^A_{k+1}(J)$, we get
\begin{center}
    $\mathcal{M}^A_{x_0;x_1,\dots,x_k}(J,f,(g^i)_s) = \mathcal{M}_A((k+1)R)\times_{ev}(W^u_{g^0}(x_0)\times\dots\times W^s_{g^k}(x_k))$.
\end{center}
Evolution map on $\mathcal{M}_A((k+1)R)$ is $ev(u) = (u(-\infty),u(R,0),\dots ,u(kR,0))$. The moduli space and the operator $m_A$ defined by this way coincide with the above method. Elements in this new moduli space have the form in figure\ref{fig:elment2}

\begin{figure}[t]
\begin{center}
\includegraphics[width=0.8\textwidth]{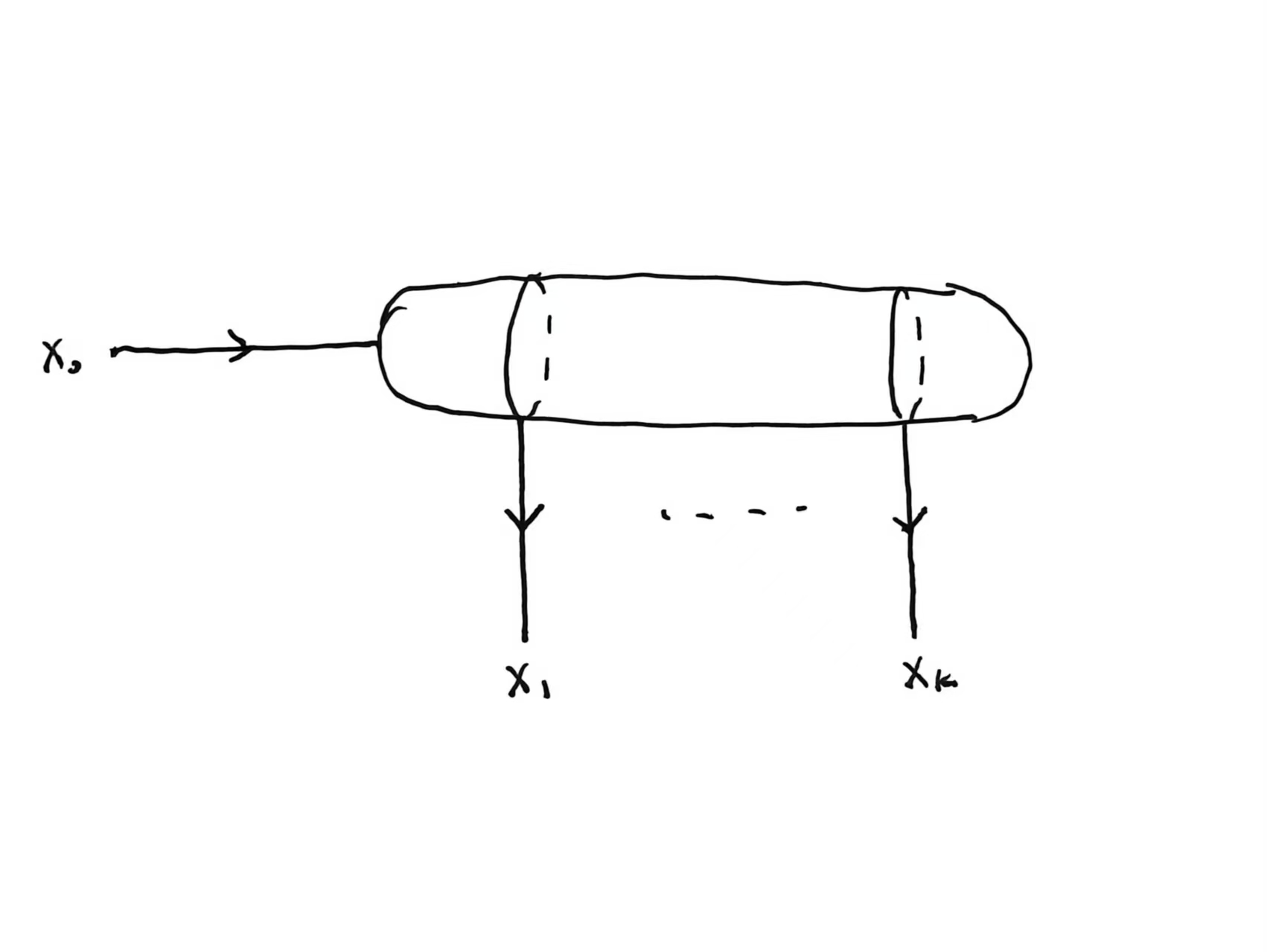}
\caption{Elements in the new moduli space.}
\label{fig:elment2}
\end{center}
\end{figure}

Let $\tilde{\mathcal{P}}(H)$ be the covering of the set of 1-periodic loops of $\psi_H^1$ such that $\mathcal{A}_H$ is well defined on it. For any $x,x'\in\tilde{\mathcal{P}}(H)$, denote by $\mathcal{M}_{x,x'}(J,H)$ the set of $u:\mathbb{R}\times S^1\rightarrow M$, $\bar{\partial}_{J,H}u = 0$ and $x = (\gamma,v) \ \ x' = (\gamma',v')$, $\lim\limits_{s\rightarrow -\infty}u(s) = \gamma$ $\lim\limits_{s\rightarrow \infty}u(s) = \gamma'$ $v\#u = v'$. We say $x\leq x'$ if  $\bar{\mathcal{M}}_{x,x'}(J,H)$ is nonempty, and $x<<x'$ if $\exists u\in \mathcal{M}_{x,x'}(J,H)$ and $y\in Crit(f),\ \mu(y)\geq1$ such that $u(0,0)\in W^s_g(y)$. Here,  $\bar{\mathcal{M}}_{x,x'}(J,H)$ is the compactification of $\mathcal{M}_{x,x'}(J,H)$, which consists of broken trajectories.

If $qcl(A) = k+1$, $\mathcal{M}^A_{x_0;x_1,\dots,x_k}((k+1)R_n)$ with $x_i\in Crit(f)$ $\mu(x_i)\geq1$ is nonempty by definition. So we can choose $u_n\in\mathcal{M}^A_{x_0;x_1,\dots,x_k}((k+1)R_n)$, $R_n\rightarrow\infty$ as $n\rightarrow \infty$. Moreover, we choose $u_n$ such that $\dim_{loc}u_n = 0$, so $|\nabla u_n|$ can be uniformly bounded. Otherwise, there will be some bubble case in the limit, but it is impossible by dimension reason. We can choose a subsequence weakly converges to a broken trajectory $(v_-,v_1,\dots,v_N,v_+)\in \mathcal{M}_{x_0}^-(\bar{J}, H) \times \mathcal{M}_{x_0, x_1}(J, H) \times \ldots \times \mathcal{M}_{x_{N-1}, x_N}(J, H) \times \mathcal{M}_{x_N}^+(\bar{J}, H)$, where
\begin{center}
    $\mathcal{M}_x^\mp(\bar{J}, H) = \{ u : \mathbb{R} \times S^1 \to M |$ \\
    $ \partial_s u + \bar{J}(\pm s, t, u)(\partial_t u - \beta(\pm s) X_H(t, u)) = 0,\, E(u) < \infty,\, u(\pm\infty) = x \}$.
\end{center}

In this limit process, a transverse relation is required
\begin{center}
    $\bigcup\limits_{\mu(u)\geq1}W^s_g(y)\cap\text{Fix}\ \psi^1_H = \phi$.
\end{center}
If $\tilde{P}(H)$ is finite, then for generic $(f,g)$ this condition is satisfied with respect to all $H_n$. 

We can take a reparametrization sequence $(\sigma_{i,n})$ for $i = 1,2,\dots,N$ so that $u(\cdot + \sigma_{i,n})\rightarrow v_i$ in $C^\infty_{loc}$ as $n\rightarrow\infty$. Choose a suitable subsequence such that $u(\cdot-jR_n,\cdot)\rightarrow w_j \in \mathcal{M}_{x_j,x_j'}(J,H)$ for some $x_j,x_j'\in\tilde{\mathcal{P}}(H)$. By definition, we will get $x_1<<x_1'\leq x_2<< x_2'\leq\cdots\leq x_k<<x_k'$, $w_j$'s are pairwisely different because $R_n\rightarrow\infty$. Then 
\begin{center}
    $\mathcal{A}_H(x_1)<\dots<\mathcal{A}_H(x_k)<\mathcal{A}_H(x_k')$.
\end{center}
After that, we can do an energy estimate, for any $u\in \mathcal{M}_A(R)$
\begin{align*}
    E(u)& = \int_{S^2}\omega(\partial_su,J_R\partial_su)\\
    & = \int_{S^2}\omega(\partial_su,\partial_tu-X_{H_R})\\
    & = \int_{S^2}u^*\omega+\int_{S^2}\frac{\partial H_R}{\partial s}dsdt\\
    & = \omega(A)+\int_{\mathbb{R}}\chi'\cdot\int_{S^1}H(t,u(s,t))dtds.
\end{align*}
$H_R = \chi H$ where $\chi = \begin{cases}
    0, & s \le 0, \\
    \beta(s), & 0 < s \le R, \\
    \beta(R + 1 - s), & R < s \le R + 1, \\
    0, & R + 1 < s.
\end{cases}$
So $E(u)\le u^*\omega +\int_{S^1}\sup H-\int_{S^1}\inf H = \omega(u)+\|H\|$. Using this estimate, together with $(v_-,v_1,\dots,v_N,v_+)$ is a limit of elements in $\mathcal{M}_A(R_n)$, we can find some $l\in\mathbb{R}$ such that 
\begin{center}
    $l<\mathcal{A}_H(x_1)<\dots<\mathcal{A}_H(x_k)<\mathcal{A}_H(x_k')<l+\omega(A)+\|H\|$.
\end{center}

Let $h_{eff}(\omega,H,J)$ be the minimal energy of closed broken trajectories in some $\bar{\mathcal{M}}_{x,x}$. Here, $\mathcal{M}(x,y)$ is the moduli space of 
\begin{center}
    $\{u:\mathbb{R}_\tau\times S^1_t\rightarrow M|\bar\partial_Hu = \partial_\tau u+ J\partial_t u +X_{H_t}(u(\tau,t)) = 0,\lim\limits_{\tau\rightarrow-\infty}u(\tau,\cdot)= x,\lim\limits_{\tau\rightarrow+\infty}u(\tau,\cdot)= y\}$.
\end{center}
and $\bar{\mathcal{M}}_{x,y}$ is its compactification. The energy $h_{eff}(\omega,H,J)$ is something like the minimal energy of holomorphic spheres in a closed symplectic manifold and it is also positive by Gromov compactness. Under the condition $\omega(\pi_2(M))=p\cdot \mathbb{Z},p>0$, a direct observation is
$h_{eff}(\omega,H,J)\geq p$.

By the definition, we have at least $(k+1)\max\{\frac{1}{n}|\frac{1}{n}<\frac{h_{eff}(\omega,H,J)}{\|H\|+\omega(A)}\})$ 1-periodic orbits. In case $\omega(H_2(M)) = p(\omega)\cdot\mathbb{Z}$ for some $p(\omega)\in\mathbb{Q}$, \ref{Schwarzthm} follows from this result by the estimate $h_{eff}(\omega,H,J)\geq p(\omega)$. As a corollary, if we take $A = 0$ and $\|H\|\leq h_{eff}(\omega,H,J)$, then we have at least $qcl(0) = cl(M)$ 1-periodic orbits.

The result of quantum cup length is similar to Givental's result above. He use the upper energy estimate of a family of trajectories, which use energy of Hamiltonian, to get the estimate of periodic orbits. So the cuplength estimate only holds for such Hamiltonians with not too big energy.

Moreover, there are also some recent results by Weimin Gong\cite{WeiminGong} we do not introduce here.

\section{The Quantum Cohomology Action on Floer Cohomology}

\subsection{quantum cohomology}

For a symplectic manifold $(M,\omega)$, the Novikov ring $\Lambda$ is formed by $\lambda = \sum\limits_{A\in H_2(M)}\lambda_Ae^A$ with each $\lambda_A$ lies in the coefficient ring $R$. And $e^A$ are formal variables subject to the relation $e^A*e^B = e^{A+B}$. Moreover, for each const C only finitely many $A$ with $\omega(A)\leq C$ have nonzero coefficient $\lambda_A$. Moreover, an equivalence relation defined on this set is $e^A\sim e^B$ if $c_1(A) = c_1(B)$, $\omega(A) = \omega(B)$.

The variable $e^A$ is considered to be of degree $2c_1(A)$.

\begin{definition}
    Let $H^*(M) = H^*(M,\mathbb{Z})/Torsion$, then the quantum cohomology is defined by
\begin{center}
    $HQ^*(M) = H^*(M)\bigotimes \Lambda$.
\end{center}
Its elements have form $\sum a_i\otimes \lambda_i$ and it is a graded ring with $deg(a_i\otimes\lambda_i) = \deg(a_i)+deg(\lambda_i)$. The ring structure is not induced by tensor, but defined by
\begin{center}
    $a*b = \sum\limits_{A\in \Gamma}a*_Ab\cdot e^A$
\end{center}
$a*_Ab$ is the form uniquely determined by $\int_M (a*_Ab)\cup c = GW_{0,3}^{M,A}(a,b,c)$. 
\end{definition}
$GW_{0,3}^{M,A}(a,b,c)$ is the Gromov-Witten invariant defined by
\begin{center}
    $GW_{0,3}^{M,A}(a,b,c) = \int_{\mathcal{M}_{0,3}(A)}ev_0^*a\wedge ev_1^*b\wedge ev_2^*c$.
\end{center}
$\mathcal{M}_{0,3}(A)$ is the moduli space of holomorphic Riemannian surfaces with genus 0 and 3 marked points. Note that to make this definition meaningful, the degree equation $\deg a+\deg b+\deg c = \dim \mathcal{M}_{0,3}(A) = 2n+2c_1(A)$ should be required. 

The ring $\{HQ^*(M),*\}$ is called small quantum cohomology, this ring structure is also denoted by $\cup^Q$.

\begin{definition}
    The quantum cuplength associated to $A\in H_2(M)$ is defined by \\
    \begin{center}
        $qcl(A) = \max \{k+1|the\ coefficient\ of\ e^A\ in\ \alpha_1*\alpha_2*\dots*\alpha_k\ is\ nonzero\ for\ some\ \alpha_i\in H^{d_i}(M),\ d_i\geq 1\}$.
    \end{center}
\end{definition}
Note that $\alpha_1*\alpha_2*\dots*\alpha_k = \alpha_1\cup\dots\cup\alpha_k+\sum\limits_{A\neq0}a_Ae^A$, so $qcl(0)$ is the usual cuplength of M.

Then we will introduce the action of this ring on Floer chain complex $CF^*(M,H) = \bigoplus\limits_{\gamma\in \mathcal{P}(H)}\Lambda \gamma$. $\mathcal{P}(H)$ is the set of all contractable 1-periodic orbits of $\psi$.

An element in $CF^*(M,)$ can be expressed by pair $(\gamma,u)$ where $\gamma\in\mathcal{P}(H)$ and $u$ is a disk filling of $\gamma$, namely, $u:\mathbb{D}^2\rightarrow M$, $\partial u = \gamma$. And $(\gamma,u)\sim (\gamma,\tilde{u})$ iff $\int_{\mathbb{D}^2}u^*\omega = \int_{\mathbb{D}^2}\tilde{u}^*\omega$, $\int_{\mathbb{D}^2}u^*c_1 = \int_{\mathbb{D}^2}\tilde{u}^*c_1$.

\begin{definition}

The definition of the action is

\begin{center}
    $\cup:HQ^*(M)\otimes CF^*(M)\rightarrow CF^*(M)$\\
    $\alpha\cup(\gamma,u) = \sum\limits_{\substack{\gamma'\in\mathcal{P}(H)\\u_i\in \mathcal{M}(\gamma,\gamma')\\u_i(0,0)\in |\alpha|}}(\gamma',u\#u_i)\ for\ \alpha\in H^*(M)$\\
    $e^A\cup(\gamma,u) = (\gamma,u\#A)$
\end{center}
$u\#u_1$ means the disk filling of $\gamma'$ given by gluing the image of $u$ and $u_1$. $|\alpha|$ means a representation the cycle of Poincar\'e duality of $\alpha$. In fact, this depends on the choice of the representation, but as Floer explained in \cite{Floer1989}, this action is commutative with the $\delta$ operator of $CF*(M)$, and $\alpha\cup$ induced by different representations are chain homotopic. So this definition will induce a well-defined action in homotopy level

\begin{center}
    $\cup:HQ^*(M)\otimes HF^*(M)\rightarrow HF^*(M)$
\end{center}

\end{definition}

Note that if we use Conley-Zehnder index $\mu$ on $CF^*(M)$, then the $\cup$ operator is graded. Then a natural proposition is 
\begin{center}
    $\alpha\cup(\beta\cup(\gamma,u)) = (\alpha\cup^Q\beta)\cup(\gamma,u)$
\end{center}
and it was proved in \cite{Schwarz}. 

\subsection{proof of $\mathbb{CP}^n$ case}

Now we give a proof of Floer's original result here. Notice that this result is already covered by Givenatl's result, we just write down an alternative method use more Floer theory here.

\begin{proposition}(Floer)
    For any Hamiltonian H on $\mathbb{CP}^n$, $\psi_H^1$ has at least $n+1$ fixed points.
\end{proposition}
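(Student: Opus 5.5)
We will use Floer's cap-action argument, expressed through the action $\cup:HQ^*(M)\otimes HF^*(M)\rightarrow HF^*(M)$ of the previous subsection. Recall the relevant facts for $\mathbb{CP}^n$. It is monotone with $c_1 = (n+1)\omega$ after normalizing $\omega(L)=1$ on the line class $L$. Its quantum cohomology is $\Lambda[x]/(x^{n+1}=e^{L})$, where $x$ is the hyperplane class of degree $2$. In particular $x^{*(n+1)} = e^{L}$ is invertible, while $x^{*k} = x^k$ is the ordinary cup power for $k\le n$. We may assume $\mathcal{P}(H)$ is finite, since otherwise there is nothing to prove. By the PSS/Floer isomorphism (taken as known, Floer \cite{Floer1989}) we have $HF^*(H)\cong HQ^*(M)$, a free $\Lambda$-module of rank $n+1$. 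In particular $HF^*(H)\neq 0$, even when $H$ is degenerate, after a small perturbation together with a limiting argument; we return to this below.

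\textbf{Step 1 (a nonzero class and a nested chain).} Pick $0\neq\xi\in HF^*(H)$. Since $x^{*(n+1)}\cup\xi = e^{L}\cup\xi\neq0$, each of $\xi,\ x\cup\xi,\ \dots,\ x^{n}\cup\xi,\ e^L\cup\xi$ is nonzero. We work with the action filtration $\mathcal{A}_H$ on $CF^*$, using the convention that the differential and the cap action by a cycle of positive codimension strictly change the action, in the same direction. Define the spectral value $c(\eta)=\inf\{a : \eta \text{ is represented by a cycle of action}\le a\}$. Each $c(\eta)$ is a critical value of $\mathcal{A}_H$, i.e. the action of some $(\gamma,u)$ with $\gamma\in\mathcal{P}(H)$.

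The key inequality, which is the Lusternik--Schnirelmann step, is
\begin{center}
$c(x\cup\eta)\le c(\eta)$,
\end{center}
with strict inequality whenever the critical level $c(\eta)$ contains no degenerate-orbit obstruction. Meanwhile $c(e^L\cup\eta)=c(\eta)-\omega(L)=c(\eta)-1$. We therefore obtain a chain
\begin{center}
$c(\xi)\ge c(x\cup\xi)\ge\dots\ge c(x^{n}\cup\xi)\ge c(x^{n+1}\cup\xi)=c(\xi)-1$
\end{center}
of $n+2$ values lying in a window of length exactly $1$.

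\textbf{Step 2 (strictness forces distinct orbits).} Suppose $\psi_H^1$ had at most $n$ fixed points. The action values $\mathcal{A}_H(\gamma,u)$ form the set $\{a_j+k : 1\le j\le n,\ k\in\mathbb{Z}\}$, since recapping by $L$ shifts the action by $1$. Hence a half-open window of length $1$ contains at most $n$ distinct critical values. The $n+2$ values above, with the endpoints differing by exactly $1$, occupy only $n+1$ positions modulo $1$ once the two endpoints are identified. So if all the inequalities are strict, at least two of the $n+1$ values $c(x^k\cup\xi)$, $0\le k\le n$, are congruent modulo $1$ and lie strictly inside the window, which is impossible. This pigeonhole contradiction gives $n+1$ fixed points, provided we have strictness.

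\textbf{Step 3 (main obstacle: strict inequality).} This is the hard step, and it is where degeneracy enters. The plan is to choose the representative $|x|$ of the Poincar\'e dual of $x$, namely a hyperplane $\mathbb{CP}^{n-1}$, generically, so that $|x|$ avoids the finitely many points $\gamma(0)$ for $\gamma\in\mathcal{P}(H)$. This is exactly the transversality condition $W^s\cap\mathrm{Fix}\,\psi_H^1=\emptyset$ used by Schwarz. We then perturb $H$ to nondegenerate $H_\nu\to H$. Suppose $c(x\cup\eta)=c(\eta)$ in the limit. Then there are Floer trajectories $u_\nu$ for $H_\nu$ with $u_\nu(0,0)\in|x|$ and energy tending to $0$. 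By the Gromov--Floer compactness of the paper's moduli spaces these converge to a constant trajectory at some $\gamma\in\mathcal{P}(H)$ with $\gamma(0)\in|x|$, a contradiction. Spectral values are continuous in $H$ (they are $\|\cdot\|$-Lipschitz), so the inequalities pass to the degenerate $H$, and the same compactness argument keeps them strict, since the energy gap is bounded below by the distance from $|x|$ to the fixed points. I expect the technical burden to lie in making the limit $\nu\to\infty$ rigorous, namely the uniform energy lower bound and bubbling exclusion. Bubbling is ruled out by monotonicity and dimension counting, as in the paper's discussion of Schwarz's argument.
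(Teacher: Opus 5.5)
Your proposal is correct, but it does the counting with spectral invariants instead of the paper's chain-level bookkeeping. The two proofs share their inputs. The first is the relation $x^{*(n+1)}=e^{L}$ acting on Floer cohomology. The second is the crossing-energy estimate: Floer trajectories of any $H'$ near $H$ that pass through a cycle missing the orbits of $H$ have energy bounded below. The paper proves this via the gradient bound and the path lemma, and your Step~3 compactness argument proves the same fact by contradiction.

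The routes differ in how the orbits are counted. The paper fixes one nondegenerate perturbation $\tilde H$ and sorts its orbits into groups. It follows a single nonzero term of $((a\cup)^{n+1})^N(\gamma,u)$ and shows that a block of $n+1$ trajectories whose endpoints revisit a group carries energy $>p$. It then closes a loop $l_i=l_j$ and uses monotonicity to convert the index shift $2(n+1)(j-i)$ into area $p(j-i)$, which contradicts that energy bound. You instead get the strictly decreasing chain $c(\xi)>c(x\cup\xi)>\dots>c(x^{n}\cup\xi)>c(\xi)-1$. You then pigeonhole these $n+1$ values, in a half-open window of length $\omega(L)$, against the spectrum $\{a_j\}+\mathbb{Z}$.

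Your route avoids the groups, the $\delta/3$ closing cylinders and the $N$-fold iteration. It also makes no use of monotonicity in the counting itself; only the period group $\omega(\pi_2)=\mathbb{Z}\omega(L)$ matters there. The cost is importing the spectral-invariant package:
\begin{itemize}
\item PSS;
\item Hofer--Lipschitz continuity and spectrality for degenerate $H$;
\item sign conventions under which $e^{L}$ shifts action in the same direction as the cap action.
\end{itemize}

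The one step you should write out is the extraction of the low-energy trajectories. For $H_\nu$, take a cycle $Z_\nu$ representing $\eta$ with top action $c(\eta;H_\nu)$. A surviving top term of $x\cap Z_\nu$ has action at least $c(x\cup\eta;H_\nu)$. It arises from a trajectory through $|x|$ of energy at most $c(\eta;H_\nu)-c(x\cup\eta;H_\nu)\to 0$. Bubbling along this sequence is excluded by energy quantization alone, not by monotonicity or dimension counting.
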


\begin{proof}
    For any Hamiltonian H, we can perturbe it a little to $\tilde{H}$ such that $\psi_{\tilde{H}}$ is nondegenerate. Moreover, we can perturbe H only in a small neighborhood of 1-periodic orbits of $\psi_H$ so that each 1-periodic orbit $\gamma_i$ of $\psi_H$ will split to several  1-periodic orbits $\gamma_{i,j}$ of $\psi_{\tilde{H}}$ which lie in a small neighborhood of $\gamma_i$. We call $\{\gamma_{i,\cdot}\}$ a group of 1-periodic orbits for each i.

    We know the quantum cohomology ring of $\mathbb{C}P^n$ is $H^*(\mathbb{C}P^n)\otimes\Lambda(\mathbb{C}P^n) = H^*(\mathbb{C}P^n,\Lambda)$ with relation $a^{n+1} = e^A$, $a$ is a generator of $H^2(\mathbb{C}P^n)$. Then use the action of $HQ^*(\mathbb{C}P^n)$ on $HF^*(\mathbb{C}P^n,\tilde{H})$

    For any fixed element $(\gamma,u)$ in $CF^*(M)$, consider 
    \begin{center}
        $(a\cup)^{n+1}(\gamma,u) = (\cup^Qa)^{n+1}\cup(\gamma,u)+(\Phi\partial+\partial\Phi)(\gamma,u)$\\$ = e^A\cup(\gamma,u)+(\Phi\partial+\partial\Phi)(\gamma,u)$\\$ = (\gamma,u\#A)+(\Phi\partial+\partial\Phi)(\gamma,u)$.
    \end{center}
    That is because $(a\cup)^{n+1}$ is homotopic to $(\cup^Qa)^{n+1}$ by some $\Phi:CF^*\rightarrow CF^*[-1]$. If the right hand side is non-zero, there must exist some $u_1,u_2,\dots,u_{n+1} $ such that $(\gamma',u\#u_1\#u_2\#\cdots\#u_{n+1})$ is a term in the left hand side of the above equality. We may assume $u_i$ start with $\beta_{i-1}$ and end with $\beta_i$. $\beta_0 = \gamma,\beta_{n+1} = \gamma' $. Then we claim there exist a $\gamma$ such that $\beta_i$ lie in different groups for $i = 1,2,\cdots,n+1$, so there must be at least $n+1$ groups. Moreover, $\psi_H$ has at least $n+1$ 1-periodic orbits. To prove this, we need a lemma 
    \begin{lemma}\label{lemma}

H is a Hamiltonian, $[A]\in H_2(M)$ expressed by a 2-cycle A in M which do not intersect with any 1-periodic orbits of $\psi_H$. Then there exist a positive number $\delta$ and a small neighborhood U of H in $C^{\infty}$ sense. Such that for any $H'\in U$, $u:\mathbb{R}_\tau\times S^1_t\rightarrow M,\ \bar\partial_{H'}u = 0$ with finite energy, $u(0,0)\in A$. We have $\int_{\mathbb{R}\times S^1}||\partial_\tau u||^2>\delta$.

    \end{lemma}

    This is easy to imagine that if u has finite energy, it must start and end with some 1-periodic orbits. Then if it hits A at point $u(0,0)$, it will have a lower bounded length and thus a lower bounded energy. We will prove it at last.

    We now assume for any $(\gamma,u)$ and non-trivial $(\gamma',u\#u_1\#u_2\#\cdots\#u_{n+1})$, we must have $\beta_i$ and $\beta_j$ lie in the same group for some $i \neq j$. Otherwise we already get $n+1$ periodic loops. Then we choose N as a positive integer. N larger than the number of 1-periodic orbits of $\psi_{\tilde{H}}$. Because $((a\cup)^{n+1})^N$ is homotopic to $(e^A)^N$ which is non-trivial. There must be some $(\gamma,u)\in CF^*(M)$ s.t. $((a\cup)^{n+1})^N(\gamma,u)\neq 0$. Let $(\gamma',u\#u_{1,1}\#\cdots\#u_{1,n+1}\#\cdots\#u_{N,n+1})$ be a nontrivial term in it and $u_{j,n+1}$ end with $l_j$ $j = 1,2,\dots,N$. Because N is large enough, so we must have $l_i = l_j $ for some $i\neq j$.

    By our assumption, for any $(l_k,u_k)$ and $(l_{k+1},u_{k+1})$, $u_k = u\#u_{1,1}\#\cdots\#u_{k,n+1}$ and $u_{i,j}$ end with $l_{i,j}$, we must have $l_{k+1,i}$ and $l_{k+1,j}$ lie in the same group for some $i\neq j$. Note that $\int_{\mathbb{R}\times S^1}u^*\omega = \int_{x^-}H_t-\int_{x^+}H_t + \int_{\mathbb{R}\times S^1}||\partial_\tau u||^2$ where $x^-= \lim\limits_{\tau\rightarrow-\infty}u(\tau,\cdot)$ $x^+= \lim\limits_{\tau\rightarrow+\infty}u(\tau,\cdot)$, so 
    \begin{center}
            $\omega(u_{k+1,i+1}\#\cdots\#u_{k+1,j}) = \int_{l_{k+1,i}}H_t-\int_{l_{k+1,j}}H_t + \sum\limits_{l = i+1}^{j}\int_{\mathbb{R}\times S^1}||\partial_\tau u_{k+1,l}||^2$.
    \end{center}

\begin{figure}[t]
\begin{center}
\includegraphics[width=0.6\textwidth]{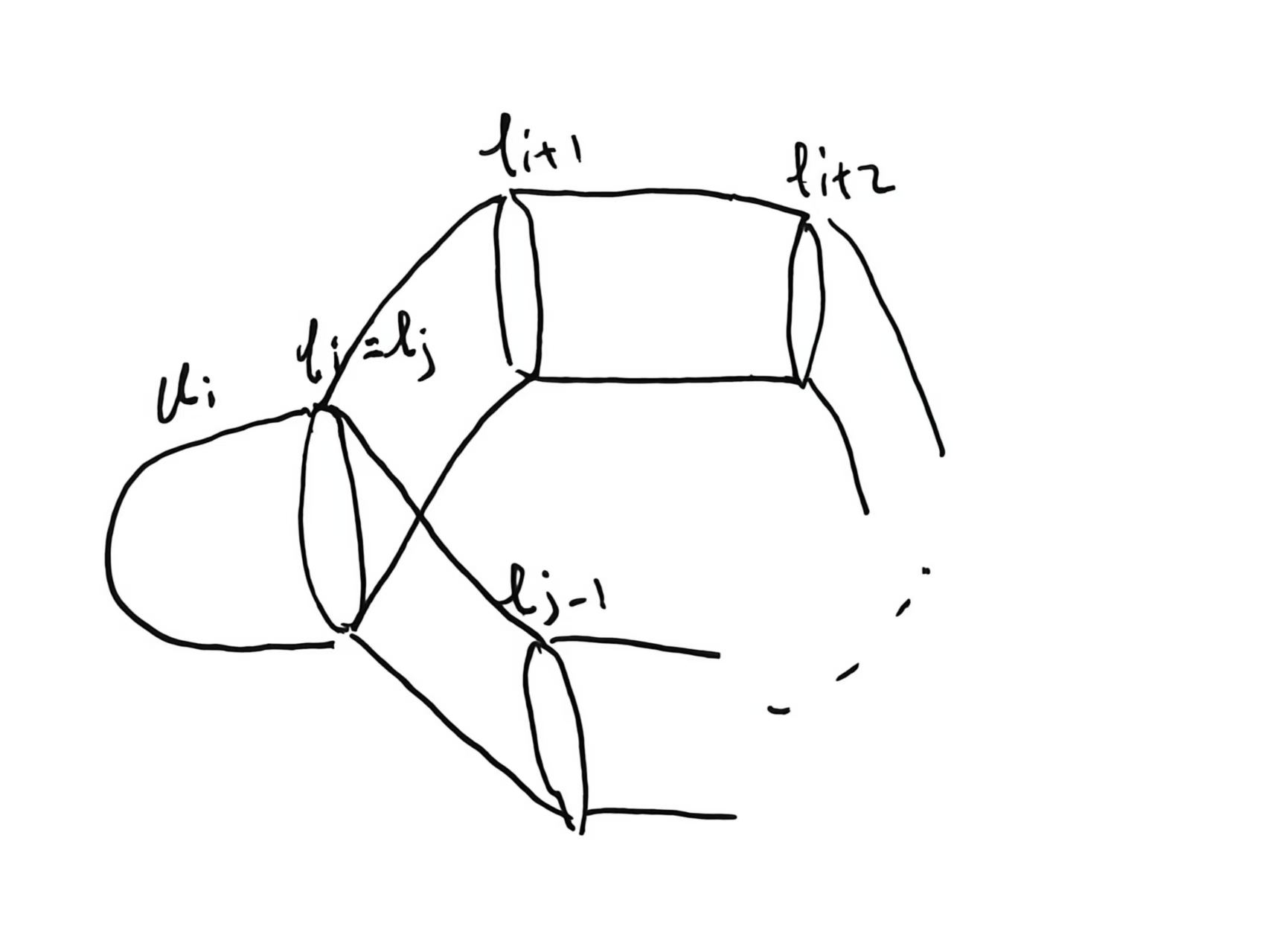}
\caption{The nontrivial trajectory.}
\label{fig:loop}
\end{center}
\end{figure}

\begin{figure}[t]
\begin{center}
\includegraphics[width=0.6\textwidth]{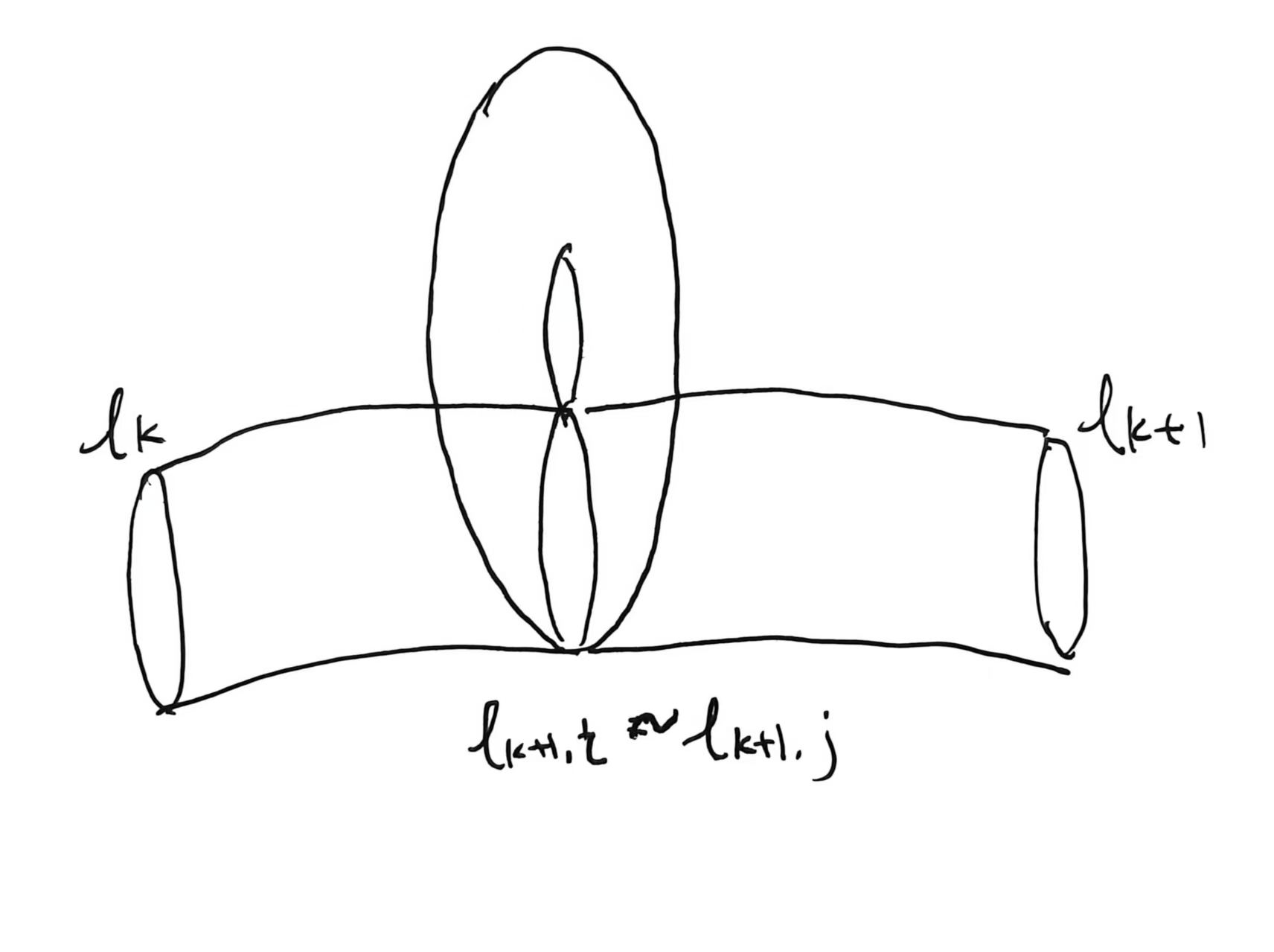}
\caption{The torus appears between $l_i$ and $l_{i+1}.$}
\label{fig:torus}
\end{center}
\end{figure}
    
    Because $l_{k+1,i}$ and $l_{k+1,j}$ lie in the same group, $|\int_{l_{k+1,i}}H_t-\int_{l_{k+1,j}}H_t|$ should be smaller than $\delta/3$ with a suitable $\tilde{H}$. And we can use a cylinder $v:\mathbb{R}\times S^1\rightarrow M$ with symplectic energy less than $\delta/3$ to join $l_{k+1,i}$ and $l_{k+1,j}$. Use lemma above we can get 
    \begin{center}
        $\omega(v\#u_{k+1,i+1}\#\cdots\#u_{k+1,j})\geq = \sum\limits_{l = i+1}^{j}\int_{\mathbb{R}\times S^1}||\partial_\tau u_{k+1,l}||^2 - |\int_{l_{k+1,i}}H_t-\int_{l_{k+1,j}}H_t| - |\omega(v)|$\\
        $\geq \delta - \delta/3-\delta/3 > 0$.
    \end{center}
    And in $\mathbb{C}P^n$ case, this means $\omega(v\#u_{k+1,i+1}\#\cdots\#u_{k+1,j})\geq p$   where $\omega(H_2(\mathbb{C}P^n)) = p\cdot \mathbb{Z}$. Now we find that \\
    \begin{align}
        \sum\limits_{l = 1}^{n+1}\int_{\mathbb{R}\times S^1}||\partial_\tau u_{k+1,l}||^2\geq & \sum\limits_{l = i+1}^{j}\int_{\mathbb{R}\times S^1}||\partial_\tau u_{k+1,l}||^2 + \delta \nonumber\\
        \geq & \delta + \omega(u_{k+1,i+1}\#\cdots\#u_{k+1,j}) - |\int_{l_{k+1,i}}H_t-\int_{l_{k+1,j}}H_t| \nonumber \\
        = & \delta +\omega(v\#u_{k+1,i+1}\#\cdots\#u_{k+1,j})-\omega(v) \nonumber\\
        &-|\int_{l_{k+1,i}}H_t-\int_{l_{k+1,j}}H_t| \nonumber \\
        \geq & \delta + p - \delta/3 -\delta/3>p
    \end{align}  
    Come back to the case $l_i = l_j$, each $\alpha\cup$ is a deg 2 operator, so $\mu(l_j,u_j) = \mu(l_i,u_1) + 2(n+1)(j-i)$. On the other hand, $u_j = u_i\#u_{i+1,1}\#\cdots\#u_{j,n+1}$, so $\mu(l_j,u_j) = \mu(l_i,u_1) + 2c_1(u_{i+1,1}\#\cdots\#u_{j,n+1})$. That means $c_1(u_{i+1,1}\#\cdots\#u_{j,n+1}) = (n+1)(j-1)$. Because $\mathbb{C}P^n$ is monotone, we know $\omega(u_{i+1,1}\#\cdots\#u_{j,n+1}) = p(j-i)$. However,  $\omega(u_{i+1,1}\#\cdots\#u_{j,n+1}) = \sum\limits_{k = i}^{j-1}\sum\limits_{l = 1}^{n+1}\int_{\mathbb{R}\times S^1}||\partial_\tau u_{k+1,l}||^2 > p(j-i)$ by (1), so we get the contradiction and end the proof.
\end{proof}

We should notice that in Schwarz's work, he used points, which is  in general homology level, to represent elements in the base chain complex and used $\bar{\partial}_H$-holomorphic trajectories to describe the quantum cup product structure. And our method here use $\bar{\partial}_H$-holomorphic trajectories to compute the base chain complex and use classes in general homology to describe the quantum structure. The key idea of these two methods is the same. That is, to estimate the number of different periodic loops in this nontrivial trajectory. 

Now, to finish our proof, we should prove  \ref{lemma}, and we will prove a claim first

\begin{claim}
    For $u:\mathbb{R}_s\times S^1_t\rightarrow M$ with $\bar{\partial}u = 0$. Let $\omega := ||\partial_\tau u||^2$, then we have $\Delta \omega\geq -a\omega^2-b$ for some constant a, b depend on M, J, H. Especially, they depend on H continuously.
\end{claim}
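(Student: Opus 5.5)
We will prove the Claim by the standard Bochner-type computation used for Floer cylinders (as in Salamon's lectures on Floer homology). Write the Floer equation as $\partial_s u + J_t(u)(\partial_t u - X_{H_t}(u)) = 0$. The first step is to use the Riemannian metric $g_{J_t} = \omega(\cdot, J_t\cdot)$, with its Levi-Civita connection $\nabla$, and to set $v := \partial_s u$. Differentiating the equation in $s$ (the coefficients do not depend on $s$) gives the linearized equation
\[
\nabla_s v + J_t(u)\nabla_t v + (\nabla_v J_t)(\partial_t u - X_{H_t}) - J_t\nabla_v X_{H_t} = 0 .
\]
Equivalently, $D v := \nabla_s v + J\nabla_t v = L(v)$, where $L$ is zeroth order in $v$. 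Its coefficients are bounded by constants times $(1+|v|)\,|v|$, using $\partial_t u - X_{H_t} = J v$. The same scheme also works for $w := \partial_t u - X_{H_t}(u)$.

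Next we compute $\Delta |v|^2 = 2|\nabla_s v|^2 + 2|\nabla_t v|^2 + 2\langle v, (\nabla_s\nabla_s + \nabla_t\nabla_t) v\rangle$. To handle the last term, we apply $\nabla_s - J\nabla_t$ to the linearized equation. This lets us rewrite $(\nabla_s^2 + \nabla_t^2)v$ as a sum of several kinds of terms:
\begin{itemize}
\item curvature terms $R(\partial_s u, \partial_t u)v$;
\item terms involving $\nabla J$, $\nabla^2 J$, $\partial_t J$;
\item terms involving $\nabla X_H$, $\nabla^2 X_H$, $\partial_t X_H$.
\end{itemize}
Each of these is multiplied by at most cubic expressions in $v$ and at most linear expressions in $\nabla v$. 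After substituting $\partial_t u = Jv + X_H$, every term is bounded by $c(|v|^3 + |v|^2 + |v|)\,|\nabla v|$ or $c(|v|^4 + |v|^2)$. The constant $c$ depends only on the $C^2$ norms of $J$ and $H$ and on the curvature of $g_J$ over the compact manifold $M$.

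We then absorb the cross terms: Cauchy--Schwarz with $2|\nabla v|^2$ on the good side gives $\langle v, \text{terms}\cdot\nabla v\rangle \ge -|\nabla v|^2 - C(|v|^4 + |v|^2)$. Together with $|v|^2 \le |v|^4 + 1$, this yields $\Delta\omega \ge -a\omega^2 - b$ for $\omega = |v|^2$. The constants $a$ and $b$ are explicit polynomial expressions in $\|H\|_{C^3}$, $\|J\|_{C^2}$ and the curvature bounds. They therefore depend continuously on $H$ in the $C^\infty$ topology, which is what Lemma \ref{lemma} needs in order to choose the neighborhood $U$ uniformly.

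The main obstacle is the bookkeeping in the second-derivative step. We must check that no term grows faster than $|v|^4$ and that the $\nabla v$ terms appear at most linearly, so that they can be absorbed. The torsion-free property and the compatibility of $\nabla$ with the metric are used when commuting $\nabla_s$ and $\nabla_t$; this is where the curvature term appears. If $\nabla J \ne 0$ makes this messy, we will work in local trivializations via Nash embedding $M\subset\mathbb{R}^N$. There the estimate reduces to the Euclidean Laplacian of $|\partial_s u|^2$ with smooth bounded coefficients, and the same quartic bound follows.
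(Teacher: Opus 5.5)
Your route is the paper's. Both arguments use a Bochner identity for $|\partial_s u|^2$ and commute $\nabla_s,\nabla_t$ to produce a curvature term. Both then use the Floer equation to turn the remaining second-order expression into lower-order terms, absorb the $\nabla v$ cross terms by Cauchy--Schwarz, and finish with $|\partial_t u|\le|\partial_s u|+\sup|X_H|$. The difference is only packaging. You apply $\nabla_s-J\nabla_t$ to the linearized equation $\nabla_s v+J\nabla_t v=L(v)$. The paper instead writes $\Delta\xi=\nabla_s(\nabla_s\xi+\nabla_t\eta)+R(\eta,\xi)\eta$ and substitutes the equation into $\nabla_s\xi+\nabla_t\eta$. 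Your version has a small bonus: since $v$ solves a linear homogeneous equation, every term is at least linear in $v$, so you actually get $\Delta\omega\ge -C(\omega+\omega^2)$.

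The step that does not go through as written is the degree count, and that count is the whole content of the claim. You state the $\nabla v$-terms are bounded by $c(|v|^3+|v|^2+|v|)|\nabla v|$. If $\langle v,\Delta v\rangle$ really contained $c|v|^3|\nabla v|$, Cauchy--Schwarz would only give $-|\nabla v|^2-C|v|^6$, i.e.\ $\Delta\omega\ge -C\omega^3$. In two dimensions $\Delta\omega\ge -a\omega^2$ is exactly the scale-invariant form. A cubic bound would break the mean value argument that follows in the paper, whose choice $\rho=(4ac)^{-1/2}$ uses the quadratic power. So ``$\nabla v$ appears at most linearly'' is not the right criterion. What you need is that the coefficient of $\nabla v$ in $\Delta v$ be $O(1+|v|)$.

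This does hold, but you have to show it. The $\nabla v$-terms in $\Delta v$ are:
\begin{itemize}
\item $(\nabla_v J)\nabla_t v$ and $J(\nabla_t J)\nabla_t v$, where $|\nabla_t J|\le c(|\partial_t u|+1)\le c(|v|+1)$;
\item $(\nabla_{\nabla v}J)Jv$ and $(\nabla_v J)J\nabla v$;
\item $J\nabla_{\nabla v}X_H$.
\end{itemize}
All remaining terms, including the curvature term $JR(v,Jv+X_H)v$, are $O(|v|+|v|^2+|v|^3)$. Pairing with $v$ gives $c(|v|+|v|^2)|\nabla v|+c(|v|^2+|v|^4)$, which absorbs exactly as you intended.

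A minor point: if you allow $t$-dependent $J_t$, the metric $g_{J_t}$ depends on $t$. Your Bochner identity and the commutator $[\nabla_s,\nabla_t]$ then pick up extra $\partial_t g$ and $\partial_t\Gamma$ terms of order $|v||\nabla v|+|v|^2+|v|^3$. These are harmless, or you can keep $J$ independent of $t$ as the paper does.
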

    
\begin{proof}
    Let $\xi = \partial_su,\ \eta = \partial_t u$, then we have relation $\xi+J\eta+\nabla H = 0$. $\nabla$ is Levi-Civita connection correspond to the metric induced by complex structure and symplectic form. $\omega = \xi^2$ and 

    \begin{align*}
        \frac{1}{2}\Delta\omega = & ||\nabla_s\xi||^2 + ||\nabla_t\xi||^2 + 2<\xi,\nabla_s\nabla_s\xi+\nabla_t\nabla_t\xi>
    \end{align*}
    \begin{align*}
        \Delta\xi = & \nabla_s\nabla_s\xi+\nabla_t\nabla_t\xi\\
         = &\nabla_s(\nabla_s\xi+\nabla_t\eta)+\nabla_t\nabla_s\eta-\nabla_s\nabla_t\eta\\
          = & \nabla_s(\nabla_s\xi+\nabla_t\eta) + R(\eta,\xi)\eta
    \end{align*}
    $|R(\eta,\xi)\eta|\leq a\|du\|^3$ for some $a$ depend on $M, J$. 
    \begin{align*}
        \nabla_s(\nabla_s\xi+\nabla_t\eta) = & \nabla_s(\nabla_s(-J\eta-\nabla H)+\nabla_t (J\xi+J\nabla H))\\
         = &\nabla_s(\nabla_t(J)\xi-\nabla_s(J)\eta+\nabla_t(J\nabla H)-\nabla_s(\nabla H))\\
          = & \nabla_s(\nabla_t(J)) + \nabla_t(J)\nabla_s\xi - \nabla_s(\nabla_s(J)) \\
          &-\nabla_s(J)\nabla_s\eta+\nabla_t(J\nabla H)-\nabla_s(\nabla H)
    \end{align*}
    $\nabla_s(\nabla_t(J))$ and $\nabla_s(\nabla_s(J))$ can be controlled by $c(\xi^2+\eta^2+|\nabla_s\xi|+|\nabla_t\xi|)$. $\nabla_t(J)\nabla_s\xi$ and $\nabla_s(J)\nabla_s\eta$ can be controlled by $c(|\xi|+|\eta|)(|\nabla_s\xi|+|\nabla_t\xi|)$. The last two terms can be controlled by $c(|\xi|+|\eta|)$. c is a constant depends on M, J and H. Moreover, it depends on H continuously. So
    \begin{align*}
       | <\xi,\nabla_s\nabla_s\xi+\nabla_t\nabla_t\xi>|\leq & c|\xi|\cdot(|du|^3+(\xi^2+\eta^2+|\nabla_s\xi|+|\nabla_t\xi|)\\
       +&(|\xi|+|\eta|)(|\nabla_s\xi|+|\nabla_t\xi|)+(|\xi|+|\eta|))\\
       \leq & c'|du|^4+\epsilon(|\nabla_s\xi|^2+|\nabla_t\xi|^2)+d
    \end{align*}
    $\epsilon$ is a small number and $c'$, $d$ are constants depend on M, J and H. Of course, they depend on H continuously. So we get 
    \begin{center}
        $\frac{1}{2}\Delta\omega\geq -a\|du\|^4-b.$
    \end{center}
 Using the relation $\xi+J\eta+\nabla H = 0$, we know $\|du\|^2 = \|\partial_s u\|^2+\|\partial_su+\nabla H\|^2\leq 3\|\partial_su\|^2+2\|\nabla H\|^2$. So we can get the result we need.
\end{proof}

Now let $u:\mathbb{R}\times S^1\rightarrow M$ with $u(0,0)\in A$, $\omega = \|\partial_\tau u\|^2$. Let $f(\rho) := (1-\rho)^2\sup_{B_\rho}\omega$ for $0\leq \rho \leq 1$, $f(1) = 0$ and $f\geq 0$. So there exist $\rho^*\in [0,1)$ such that $f(\rho^*)$ is the maximum value of f in [0,1]. Then we denote $\sup_{B_{\rho^*}}(\omega) = \omega(z^*)$ by c for some $z^*\in B_1 \subset \mathbb{R}\times S^1$.

Let $\epsilon:=\frac{1-\rho^*}{2}$, we have $\sup_{B_\epsilon(z^*)}\omega\leq\sup_{B_{\rho^*+\epsilon}}\omega = \frac{f(\rho^*+\epsilon)}{(1-\rho^*-\epsilon)^2} = \frac{4f(\rho^*+\epsilon)}{(1-\rho^*)^2}\leq \frac{4f(\rho^*)}{(1-\rho^*)^2} = 4c$ and thus $\Delta\omega\geq -a\omega^2-b\geq-16ac^2-b$ in $B_\epsilon(z^*)$.

Note that for any smooth $\omega:B_r\rightarrow\mathbb{R}$, $\Delta\omega\geq-m$, m is a constant, then $\omega(0)\leq \frac{mr^2}{8}+\frac{1}{\pi r^2}\int_{B_r}\omega$. This is mean value inequality of $\nu = \omega+\frac{m}{4}(x^2+y^2)$. In above case, it is $c = \omega(z^*)\leq \frac{16ac^2+b}{8}\rho^2+\frac{1}{\pi\rho^2}\int_{B_\rho(z^*)}\omega$ for $0<\rho\leq \epsilon$. Then we assume that $\int_{B_1}\omega<\frac{\pi}{16a}$

1) If $4ac\epsilon^2\geq1$, we take $\rho = \sqrt{\frac{1}{4ac}}  \leq \epsilon$. Then $c = \omega(z^*)\leq \frac{c}{2}+\frac{b}{32ac}+\frac{4ac}{\pi}\int_{B_1}\omega$ and use $\int_{B_1}\omega<\frac{\pi}{16a}$, we get $c\leq \frac{c}{2}+\frac{b}{32ac}+\frac{c}{4}$. Thus $c\leq\sqrt\frac{b}{8a}$ and $\omega(0) = f(0) \leq f(\rho^*) = (1-\rho^*)^2c\leq c \leq \sqrt{\frac{b}{8a}}$.

2) If $4ac\epsilon^2<1$, $\omega(0) = f(0)\leq f(\rho^*) = (1-\rho^*)^2c = 4c\epsilon^2<\frac{1}{a}$.

So we get if $\int_{B_1}\omega<\frac{\pi}{16a}$, then $\omega(0)<\min\{\frac{1}{a},\sqrt{\frac{b}{8a}}\}$. Denote this bound by $C(a,b)$. Recall that we are dealing with $u:\mathbb{R}\times S^1\rightarrow M$ and $\omega = \|\partial_\tau u\|^2$, then if $\int_{\mathbb{R}\times S^1}\omega<\frac{\pi}{16a}$, we will get $|\partial_\tau u|<C(a,b)$ because for any point in $\mathbb{R}\times S^1$ we can choose a unit disc in $\mathbb{R}\times S^1$ centered at this point and restrict the problem on this disc. Moreover, $|\partial_t u| = |\partial_\tau u +\nabla H|\leq C'(a,b) = C(a,b)+\max|\nabla H|$, so $|du|\leq 2C'(a,b)$

Because $u(0,0)\in A$, $u(B_{\frac{\epsilon}{4C'}})\subset B_{\frac{\epsilon}{2}}(A)$. Here $B_{\frac{\epsilon}{2}}(A):=\{x\in M|d(x,A)\leq \frac{\epsilon}{2}\}$ and $\epsilon$ is a positive number such that $B_\epsilon(A)$ do not intersect with 1-periodic orbits of $\psi_H$. Moreover, we can get $u([-\frac{\epsilon}{4C'},\frac{\epsilon}{4C'}]\times{0})\subset B_{\frac{\epsilon}{2}}(A)$ and we will finish the proof by the following lemma

\begin{lemma}
    There exist $\delta>0$ depends on M, J, A and H such that for any $\gamma:S^1\rightarrow M$, $\gamma(0)\in B_{\frac{\epsilon}{2}}(A)$, we have $\int_{S^1}\|J\gamma'+\nabla H\|^2>\delta$. Of course, $\delta$ depends on H locally continuously. Here, the locally means it is continuous in a small neighborhood of H.
\end{lemma}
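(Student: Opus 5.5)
Argue by contradiction with a compactness argument on the loop space. Write the integrand as $\|J\gamma'+\nabla H\|^2 = \|\gamma' - X_{H_t}(\gamma)\|^2$, since $J$ is an isometry and $J\nabla H_t = -X_{H_t}$ up to sign convention. The quantity $E_H(\gamma) := \int_{S^1}\|\gamma'(t)-X_{H_t}(\gamma(t))\|^2dt$ vanishes exactly when $\gamma$ is a 1-periodic orbit of $\psi_H$. The idea is that a loop with small $E_H$ stays uniformly close to a genuine Hamiltonian trajectory, and that trajectory must close up almost exactly. The constraint $\gamma(0)\in B_{\frac{\epsilon}{2}}(A)$ then keeps it a definite distance from $\mathrm{Fix}\,\psi_H$.

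\textbf{Step 1 (local comparison with the flow).} For a loop $\gamma$, set $x(t) := \psi_H^t(\gamma(0))$ and compare it with $\gamma(t)$ in charts, or via Riemannian distance. Let $L$ be a Lipschitz constant of $X_{H_t}$, uniform in $t$. A Gronwall argument gives
\[
d(\gamma(t),x(t))\leq e^{L}\int_0^1\|\gamma'-X_{H_t}(\gamma)\|\,ds\leq e^{L}E_H(\gamma)^{1/2}
\]
for $t\in[0,1]$, using Cauchy--Schwarz. Taking $t=1$ and using $\gamma(1)=\gamma(0)$ yields
\[
d(\psi_H^1(\gamma(0)),\gamma(0))\leq e^{L}E_H(\gamma)^{1/2}.
\]

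\textbf{Step 2 (uniform displacement away from fixed points).} The set $K := B_{\frac{\epsilon}{2}}(A)$ is compact. By the choice of $\epsilon$, $K$ is disjoint from the starting points of 1-periodic orbits, i.e.\ from $\mathrm{Fix}\,\psi_H^1$. The continuous function $p\mapsto d(\psi_H^1(p),p)$ is therefore positive on $K$, so it has a minimum $\eta>0$. Combining with Step 1, $E_H(\gamma)\geq e^{-2L}\eta^2$, and we may take $\delta := \frac12 e^{-2L}\eta^2$.

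\textbf{Step 3 (continuity in $H$).} For $H'$ in a small $C^1$ (hence $C^\infty$) neighborhood $U$ of $H$, the Lipschitz constant $L'$ stays below $L+1$. The map $\psi_{H'}^1$ is $C^0$-close to $\psi_H^1$, so $\min_K d(\psi_{H'}^1(p),p)\geq \eta/2$. The same argument then gives a uniform $\delta$ on $U$, and $\eta(H')$, $L(H')$ vary continuously there. This is the form of the lemma needed for Lemma \ref{lemma}, applied to $\gamma = u(\tau,\cdot)$ for $\tau$ in the interval where $u(\tau,0)\in B_{\frac{\epsilon}{2}}(A)$.

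The main obstacle I expect is a subtlety in Step 2. The $\epsilon$ in the paper is chosen so that $B_\epsilon(A)$ avoids the \emph{orbits}, and we need only that their initial points $\gamma(0)$ avoid $K$. That holds since each orbit passes through its own initial point. One must also fix the conventions relating $\nabla H$, $J$ and $X_H$ so that the integrand really is $\|\gamma'-X_H\|^2$. The Gronwall estimate in a manifold should be done with a finite atlas or via isometric embedding $M\subset\mathbb{R}^N$, extending $X_H$ Lipschitz-ly.
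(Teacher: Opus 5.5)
Your proof is correct. It rests on the same two ingredients as the paper's: a compactness lower bound on the displacement $d(p,\psi_H^{\pm 1}(p))$ for $p\in B_{\frac{\epsilon}{2}}(A)$ (your $\eta$, the paper's $\delta_1$), and a comparison of $\gamma$ with a genuine flow line. The execution differs.

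The paper pulls $\gamma$ back by the flow, $\beta(t)=\psi_H^{-t}(\gamma(t))$, and subdivides $\beta$ into $N$ pieces with $N\geq 2\|\nabla(J\nabla H)\|_{L^\infty}$. It then uses the auxiliary ratio $f(x,y)=\inf_t d(\psi_H^t x,\psi_H^t y)/d(x,y)$ and its minimum $\delta_2$ to find one time interval of length at most $1/N$ on which $\gamma$ separates from the flow line by at least $\delta_1\delta_2/N$. On that short interval a linear comparison absorbs the Lipschitz term.

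You instead run a single Gronwall estimate over all of $[0,1]$ and let the factor $e^{L}$ absorb the Lipschitz term. This removes the subdivision and the function $f$, and gives the explicit constant $\delta=\frac12 e^{-2L}\eta^2$. It also makes the uniformity over a neighbourhood of $H$ transparent, since only $\eta$ and $L$ need to be controlled.

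Your passage from $L^1$ to $L^2$ via Cauchy--Schwarz is the right one. The paper's last step, $\int_0^1|\dot\gamma-J\nabla H|^2\geq\int_0^1|\dot\gamma-J\nabla H|$, is not valid as written; it should read $\int_0^1|\cdot|^2\geq(\int_0^1|\cdot|)^2$.

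Two small remarks. First, with $g=\omega(\cdot,J\cdot)$ and $dH=\omega(\cdot,X_H)$ one gets $X_H=J\nabla H$, so $J\gamma'+\nabla H=J(\gamma'-X_{H_t})$ exactly, with no sign ambiguity; this is the integrand the paper uses as well. Second, keeping the Lipschitz constant of $X_{H'}$ below $L+1$ needs $C^2$-closeness of $H'$, not just $C^1$. This is harmless because $U$ is a $C^\infty$ neighbourhood.

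Your embedding (or chart) remark also deals with the non-differentiability of the Riemannian distance at cut points. The paper's derivative computation along a minimal geodesic passes over this issue silently.
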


Using this lemma, if $\int_{B_!}\omega<\frac{\pi}{16a}$, $\int_{\mathbb{R}\times S^1}|\partial_\tau u|^2>\int_{[-\frac{\epsilon}{4C'},\frac{\epsilon}{4C'}]\times S^1}|\partial_\tau u|^2 = \int_{[-\frac{\epsilon}{4C'},\frac{\epsilon}{4C'}]\times S^1}|J\partial_t u +\nabla H|^2>\frac{\epsilon}{2C'}\cdot \delta$, so we finished the proof of \ref{lemma}.

\begin{proof}
    There exists $\delta_1$ such that any $\gamma:S^1\rightarrow M$, $\gamma(0)\in B_{\frac{\epsilon}{2}}(A)$, it should satisfy $d(\gamma(0),\psi_H^1(\gamma(0)))\geq \delta_1$ and  $d(\gamma(0),\psi_H^{-1}(\gamma(0)))\geq \delta_1$.
    
    Take $N$ a large integer with $N\geq 2\|\nabla(J\nabla H)\|_{L^\infty}$. Define a path $\beta(t):= \psi_H^{-t}(\gamma(t))$ between $\gamma(0) $ and $ \psi_H^{-1}(\gamma(1))$. Choose some points $\gamma_1,\gamma_2,\dots ,\gamma_{N-1}$ on $\beta$ such that $d(\gamma_i,\gamma_{i+1})\geq \frac{1}{N}d(\gamma_0,\gamma_N)$, where $\gamma_0 = \gamma(0)$ and $\gamma_N = \psi_H^{-1}(\gamma(1))$. Now consider a function $f:M\times M\rightarrow \mathbb{R}$, $f(x,y) = \frac{\inf\limits_{0\leq t \leq 1}d(\psi_H^t(x),\psi_H^t(y))}{d(x,y)}$, $f$ is positive and continuous on $M\times M$ except the diagonal $\Delta$. So f has a minimal value $\delta_2$ on $M\times M - B_{\epsilon'}(\Delta)$, here $\epsilon'$ is a positive number smaller than $\frac{\delta_1}{N}$, we can choose it to be $\frac{\delta_1}{2N}$ for simplicity. So $\delta_2$ is depend on $\delta_1$ and $N$ continuously, thus it depends on H locally continuously.

\begin{figure}[t]
\begin{center}
\includegraphics[width=0.8\textwidth]{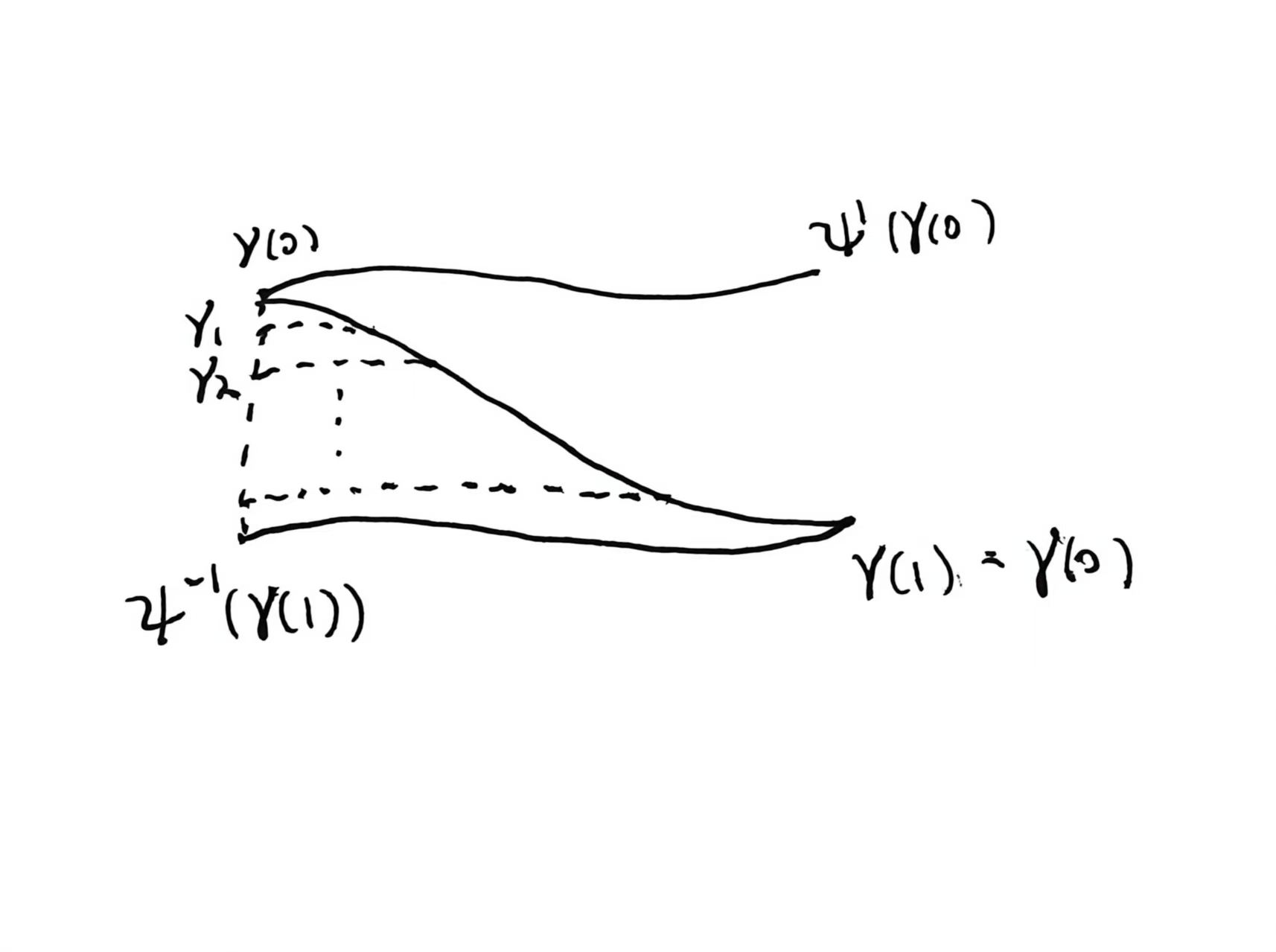}
\caption{Path from $\gamma(0)$ to $\gamma(1)$.}
\label{fig:mainlemma}
\end{center}
\end{figure}

    Let $t_i = \beta^{-1}(\gamma_i)$, then there must exist some i such that $t_{i+1}-t_1\geq \frac{1}{N}$. Use the definition we can get 
    \begin{align*}
        d:=&d(\gamma(t_{i+1}),\psi_H^{t_{i+1}-t_i}\gamma(t_i)) \\
          =&d(\psi_H^{t_{i+1}}\gamma_{i+1},\psi_H^{t_{i+1}}\gamma_i)\\
          \geq & \delta_2\cdot d(\gamma_{i+1},\gamma_i)\\
          \geq & \delta_2\cdot \frac{1}{N}\cdot d(\gamma_0,\gamma_N)\\
          \geq & \delta_2\cdot \frac{1}{N}\cdot\delta_1.
    \end{align*}

    We take derivative of $d(\gamma(t_i+t),\psi_H^t(\gamma(t_i)))$
    \begin{align*}
        \frac{d}{dt}&(d(\gamma(t_i+t),\psi_H^t(\gamma(t_i)))) = \dot{\gamma}(t_i+t)\cdot X_t+\dot{\psi_H^t}(\gamma(t_i))\cdot Y_t\\
         = & (\dot{\gamma}(t_i+t)-J\nabla H(\gamma(t_i+t)))\cdot X_t + J\nabla H(\gamma(t_i+t))\cdot X_t +J\nabla H(\psi_H^t(\gamma(t_i)))\cdot Y_t
    \end{align*}
    Let $s$ be the minimal geodesic between $\gamma(t_i+t)$ and $\psi_H^t(\gamma(t_i))$, $s(0) = \gamma(t_i+t)$ and $s(l) = \psi_H^t(\gamma(t_i))$. Then $X_t$ is $-\dot{s}(0)$ and $Y_t$ is $\dot{s}(l)$. Namely, $X_t = \nabla d(\cdot,\psi_H^t(\gamma(t_i)))(\gamma(t_i+t))$, $Y_t = \nabla d(\gamma(t_i+t),\cdot)(\psi_H^t(\gamma(t_i)))$. 

    $|J\nabla H(\gamma(t_i+t))\cdot X_t +J\nabla H(\psi_H^t(\gamma(t_i)))\cdot Y_t| = |\int_s \frac{d}{dt}(J\nabla H\cdot \dot{s}(t))|\leq \|\nabla(J\nabla H)\|\cdot length(s)\leq \frac{N}{2}\cdot length(s)$. We now take $\tilde{t_i} = \inf\limits_{t_i\leq t}\{t|d(\gamma(t),\psi_H^{t-t_i}(\gamma(t_i)))\geq d\}$ and $|J\nabla H(\gamma(t_i+t))\cdot X_t +J\nabla H(\psi_H^t(\gamma(t_i)))\cdot Y_t|\leq \frac{N}{2}\cdot length(s)\leq \frac{N}{2}\cdot d$ for $t\in [0,\tilde{t_i}-t_i]$. So
    \begin{align*}
        d = & d(\gamma(\tilde{t_i}),\psi_H^{\tilde{t_i}-t_i}(\gamma(t_i)))\\
        \leq & \int_{t_i}^{\tilde{t_i}}|\frac{\partial}{\partial t}d(\gamma(t),\psi_H^{t-t_i}(\gamma(t_i)))|dt\\
        \leq & \int_{t_i}^{\tilde{t_i}}(|\dot{\gamma}(t)-J\nabla H|+d\cdot \frac{N}{2})dt\\
        \leq & \int_{t_i}^{\tilde{t_i}}(|\dot{\gamma}(t)-J\nabla H|)dt+d\cdot\frac{N}{2}\cdot\frac{1}{N}.
    \end{align*}
    So we get $\frac{d}{2}\leq \int_{t_i}^{\tilde{t_i}}(|\dot{\gamma}(t)-J\nabla H|)dt$, thus $\int_{0}^{1}(|\dot{\gamma}(t)-J\nabla H|^2)dt\geq\int_{0}^{1}(|\dot{\gamma}(t)-J\nabla H|)dt\geq \frac{d}{2}\geq \frac{\delta_1\delta_2}{2N}$.
\end{proof}

\subsection{Application on Monotone 1-point Blow up of $\mathbb{C}P^n$}

\begin{proposition}
    This estimate also hold for monotone 1-point Blow up of $\mathbb{C}P^n$
\end{proposition}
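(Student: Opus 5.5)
The plan is to run the same argument as in the proof of Floer's proposition for $\mathbb{C}P^n$. The target is: for every Hamiltonian $H$ on the monotone one-point blow-up $X=\widetilde{\mathbb{C}P^n}$, $\psi_H^1$ has at least $n+1$ fixed points. This is the cuplength estimate, since $cl(X)=n$: the pull-back $a$ of the hyperplane class satisfies $a^n=0$ and $E^n\neq 0$ classically. The $\mathbb{C}P^n$ proof used only three ingredients.
\begin{enumerate}
\item A class $\alpha$ of degree $\geq 1$ whose quantum powers have a nonzero $e^A$-coefficient with $\omega(A)>0$. This makes $(\alpha\cup)^{kN}$ non-trivial on $HF^*$ for every $N$, via the homotopy $\Phi$ and the relation $\alpha\cup(\beta\cup x)=(\alpha\cup^Q\beta)\cup x$.
\item Lemma~\ref{lemma}, which forces every trajectory through the cycle $|\alpha|$ to carry energy at least $\delta$.
\item Monotonicity together with rationality of $\omega(\pi_2)$. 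This turns the index bookkeeping along a nontrivial chain into an exact value of the symplectic area. That exact value is then compared with the energy lower bound.
\end{enumerate}
Ingredients 2 and 3 carry over verbatim to $X$. Lemma~\ref{lemma} is stated for an arbitrary closed $M$, and $X$ is monotone, $c_1=\kappa^{-1}[\omega]$, with $\omega(\pi_2(X))=p\mathbb{Z}$. So the new work is in ingredients 1 and 3.

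\textbf{Step 1: quantum relations.} I would present $QH^*(X)$ torically. The fan has rays $e_1,\dots,e_n$, $v_0=-\sum e_i$ and $v_{n+1}=\sum e_i$. The divisors are $D_i$, $i=1,\dots,n$, the proper transforms of hyperplanes through the point, with $D_i\sim a-E$. We also have $D_0\sim a$ and $D_{n+1}=E$. The primitive collections $\{D_1,\dots,D_n\}$ and $\{D_0,D_{n+1}\}$ give, via Batyrev/Givental, the quantum Stanley--Reisner relations
\[
(a-E)^{*n}=e^{F}\,E,\qquad a*E=e^{\ell},
\]
where $\ell$ is the class of a line in the exceptional divisor and $F$ is the fibre class of $X\to\mathbb{C}P^{n-1}$. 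Here $c_1(\ell)=n-1$, $c_1(F)=2$, and $\omega(F),\omega(\ell)>0$. Multiplying the first relation by $a^{*}$ powers and using the second, I aim to show that the degree-$2$ class $\alpha=a$ is not nilpotent. More precisely, I want
\[
a^{*(n+1)}=e^{L}+(\text{terms } e^{B}\beta_B \text{ with } \beta_B\in H^{>0})
\]
or an analogous statement, where $L=F+\ell$ is the line class. Concretely, I would check that $a$ is invertible in $QH^*(X)$, which already follows from $a*E=e^{\ell}$ provided $E$ is shown to be a unit. Then $a^{*k}\neq 0$ for all $k$. In particular $(a\cup)^{k}$ is not chain-nilpotent on $CF^*(X,\tilde H)$, and for each $N$ there is $(\gamma,u)$ with $(a\cup)^{(n+1)N}(\gamma,u)\neq0$.

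\textbf{Step 2: the chain argument.} I then split a nontrivial chain $u_{1,1}\#\cdots\#u_{N,n+1}$ into $N$ blocks of $n+1$ consecutive steps, each step passing through $|a|$. As before, either some block visits $n+1$ distinct groups, which already gives $n+1$ fixed points of $\psi^1_H$, or every block contains a repeat $\beta_i,\beta_j$ in one group. In the latter case, closing up with a short cylinder $v$ gives a sphere class $B$ with $\omega(B)\geq p$ strictly. Lemma~\ref{lemma} then gives energy excess $\delta-2\delta/3>0$ over $\omega(B)$ on that block, exactly as in inequality (1). Summing over blocks, and using pigeonhole to find $l_i=l_j$, yields a closed capped loop. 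Its $c_1$ is fixed by the grading shift $2(n+1)(j-i)$, hence its $\omega$ equals $\kappa(n+1)(j-i)$ exactly. Its energy, however, is strictly larger, and this is the contradiction.

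\textbf{Main obstacle.} The hard point is the lattice bookkeeping in Step 2. In $\mathbb{C}P^n$, the area gained per block, $p$, equals $\kappa(n+1)$, which is exactly the area budget per block forced by the index. In $X$ the minimal Chern number is $\gcd(n+1,n-1)$, so $p=\kappa\gcd(2,n-1)$ is smaller than $\kappa(n+1)$. There is a further error: within a group, the perturbed orbits have Conley--Zehnder indices differing by up to $n$, which spoils the exact identity $c_1(B)=j-i$. My first attempt is to sharpen the energy gain using the structure of the relation. A repeat at distance $j-i<n+1$ inside a block produces a sphere class with $c_1(B)=j-i+O(n)$. Since $\omega(B)>0$ and $\omega=\kappa c_1$, this class must have $c_1(B)\geq 1$. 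I would then show, by comparing with the degrees in $a^{*(n+1)}=e^{L}+\dots$, that the repeated segments in the $N$ blocks account for at least the full budget $\kappa(n+1)$ per block. Each repeat also contributes an extra $\delta/3$, so the total over the closed loop still strictly exceeds the area fixed by the index, which is the contradiction. If this fails, the fallback is to replace $a$ by a class like $c_1(X)$ or $a-E$, chosen so that the leading quantum term of its $(n+1)$-st power has $c_1$ equal to the minimal positive value. That would make the per-block gap and the per-block budget coincide as in the projective case.
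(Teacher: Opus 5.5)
You have located the gap yourself but not closed it, and neither of your remedies can close it. The $\mathbb{CP}^n$ contradiction compares two quantities for each block of $n+1$ steps, a \emph{budget} and a \emph{gain}. The budget is $\kappa(n+1)$. It comes only from the degree-$2$ shift of each $a\cup$ together with the exact repeat $l_i=l_j$, and uses no quantum relation. The index spread inside a group that worries you never enters, because the index is applied only to that exact repeat. The gain comes only from closing a within-group repeat by a short cylinder: this gives a sphere of positive area, hence of area at least $p$. So the contradiction needs $p\geq\kappa(n+1)$, i.e.\ $N_{\min}\geq n+1$. On $X$ you have $p=\kappa\gcd(2,n-1)\leq 2\kappa<\kappa(n+1)$ for $n\geq 2$.

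Your ``sharpening'' has no mechanism behind it. Positivity of area is the only information the argument has about a within-group repeat. The quantum relation enters only to guarantee that some chain of $(a\cup)^{(n+1)N}$ is nonzero; it gives no control over how area is distributed among the individual trajectories. Indeed the product has low-area corrections already in $a*a=a^{2}+e^{F}$, with $c_1(F)=2$.

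The fallback is ruled out by degree. A term $e^{A}\cdot 1$ in $\alpha^{*(n+1)}$ with $\deg\alpha=2d\geq 2$ forces $c_1(A)=d(n+1)\geq n+1$, never $\gcd(2,n-1)$. More generally, with blocks of $k$ steps through $|\alpha|$ the budget is $\kappa dk$, against a guaranteed gain of only $\kappa\gcd(2,n-1)$. So this scheme can certify at most $\gcd(2,n-1)\leq 2$ fixed points on $X$, not $n+1$.

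For comparison, the paper follows exactly your plan~A. It closes the argument by asserting $a^{*(n+1)}=e^{A}$ with $A$ of minimal positive area, computing degrees as if $c_1(A)=n+1$. That is false on $X$: the minimal positive value of $c_1$ on $H_2(X)$ is $\gcd(2,n-1)$. For $n=2$ a direct count gives $a*a=pt+e^{F}$ and $a*pt=e^{F}(a-E)+e^{L}$, hence $a^{*3}=e^{L}+e^{F}(2a-E)$. So the paper's proof has the same hole you found, hidden behind an incorrect computation. Proving the statement needs a genuinely new ingredient, such as an area gain of $\kappa(n+1)$ per block, which nothing in the present argument provides.

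Two smaller slips in Step~1:
\begin{itemize}
\item Your labels $F$ and $\ell$ are swapped. By degree, the relations are $(a-E)^{*n}=e^{\ell}E$ and $a*E=e^{F}$. The correct relation gives $a*(e^{-F}E)=1$, so $a$ is a unit immediately, with no separate argument about $E$.
\item On $X$ one has $a^{n}=pt\neq 0$; it is $a^{n+1}$ that vanishes. Your conclusion $cl(X)=n$ is still correct.
\end{itemize}
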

\begin{proof}
    To extend this result on monotone 1-point Blow up of $\mathbb{C}P^n$, we first need to clear the quantum cohomology structure of it. Denote this manifold by M, then $H^*(M) = \mathbb{Z}[a,b]/(a^{n+1} = b^{n+1} = 0,a^n = b^n)$ and $\deg(a) = \deg(b) = 2$. The Novikov ring $\Lambda$ is cmposed of 
    \begin{center}
        $\{\sum a_ie^{k_iA} |\ k_i\rightarrow\infty\ \text{as}\ i\rightarrow\infty \}$.
    \end{center}
    Here, $A\in H_2(M)$ is a generator with the minimal positive symplectic energy on $H_2(M)$. And it's sufficient to prove $(\cup^Q)^{n+1}a = e^A$ in $HQ^*(M)$ so we can repeat the process in case of $\mathbb{C}P^n$.
    
    For $a^m\cup^Q a = a^m*_0a+\sum\limits_{i\neq0}a_{i}e^{iA}$, if $0\leq m<n$, it will only have $a^m*_0a$ part because $\deg(a_{i}e^{iA}) = 2ic_1(A) + \deg(a_i)= 2i(n+1)+\deg(a_i)$. It must greater or equal to $2(n+1)$ because the moduli space $\mathcal{M}(iA)$ should be nonzero. But $\deg(a^m\cup^Q a) = 2(m+1)<2(n+1)$. Namely, it's just the usual cup product and $a^m\cup^Q a = 0$. 
    
    If $m = n$,  $a^n\cup^Qa = a^{n+1}+a^n*_Aa = a^n*_Aae^A$ by the similar argument of degree issue. By definition, $a^n*_Aa = GW_{0,3}^{M,A}(a^n,a,dvol)$. There is exactly one holomorphic sphere intersect with $PD[a^n] = pt$, $PD[a]$, $PD[dvol] = pt$ at 1, 0, $\infty$. Namely, $GW_{0,3}^{M,A}(a^n,a,dvol) = 1$, thus $a^n\cup^Qa = e^A$.
\end{proof}

The monotonicity is important because it is used in energy estimation. for $\mathbb{C}P^n$ blow up at several points, if it has a monotone symplectic form $\omega$, the estimate above still hold because the relation $(\cup^Q)^{n+1}a = e^A$ still holds in this case.

In fact, this method will work for all manifolds whose quantum cohomology has an element $a\in H^2(M)$, and some power of $a$ in $HQ^*(M)$ has a factor $e^A$, $A\in H_2(M)$ has the minimal symplectic energy.

As an example, for $\prod\mathbb{CP}^{i_k}$. If the symplectic energy of each factor are the same, we can get $\max\{i_k+1\}$ fixed points. This is coinside with the result of Givental, and is stronger than Floer said in his original paper.

\newpage
\bibliographystyle{plain}
\bibliography{references.bib}
\end{document}